\author{Pierre-Lo\"ic M\'eliot\addressmark{1}}
\title[Products of Geck-Rouquier conjugacy classes]{Products of Geck-Rouquier conjugacy classes and the Hecke algebra of composed permutations}
\address{\addressmark{1}Institut Gaspard Monge --- Universit\'e Paris-Est Marne-La-Vall\'ee --- 77454 Marne-La-Vall\'ee cedex 2 \\
meliot@phare.normalesup.org}
\keywords{Iwahori-Hecke algebras, Geck-Rouquier conjugacy classes, symmetric functions.}
\begin{document}
\maketitle
\begin{abstract}
\paragraph{Abstract.}
We show the $q$-analog of a well-known result of Farahat and Higman: in the center of the Iwahori-Hecke algebra $\mathscr{H}_{n,q}$, if $(a_{\lambda\mu}^{\nu}(n,q))_{\nu}$ is the set of structure constants involved in the product of two Geck-Rouquier conjugacy classes $\Gamma_{\lambda, n}$ and $\Gamma_{\mu,n}$, then each coefficient $a_{\lambda\mu}^{\nu}(n,q)$ depend on $n$ and $q$ in a polynomial way. Our proof relies on the construction of a projective limit of the Hecke algebras; this projective limit is inspired by the Ivanov-Kerov algebra of partial permutations.

\paragraph{R\'esum\'e.}
Nous d\'emontrons le $q$-analogue d'un r\'esultat bien connu de Farahat et Higman : dans le centre de l'alg\`ebre d'Iwahori-Hecke $\mathscr{H}_{n,q}$, si $(a_{\lambda\mu}^{\nu}(n,q))_{\nu}$ est l'ensemble des constantes de structure mises en jeu dans le produit de deux classes de conjugaison de Geck-Rouquier $\Gamma_{\lambda,n}$ et $\Gamma_{\mu,n}$, alors chaque coefficient $a_{\lambda\mu}^{\nu}(n,q)$ d\'epend de fa\c con polynomiale de $n$ et de $q$. Notre preuve repose sur la construction d'une limite projective des alg\`ebres d'Hecke ; cette limite projective est inspir\'ee de l'alg\`ebre d'Ivanov-Kerov des permutations partielles.
\end{abstract}

\newcommand{\Z}{\integers}                
\newcommand{\N}{\naturals}                
\newcommand{\Q}{\rationals}
\newcommand{\C}{\complexes}            
\newcommand{\IH}{\mathscr{H}}          
\newcommand{\sym}{\mathfrak{S}}      
\newcommand{\Comp}{\mathfrak{C}}   
\newcommand{\Part}{\mathfrak{P}}      
\newcommand{\Zen}{\mathscr{Z}}        
\newcommand{\pr}{\mathrm{pr}}           
\newcommand{\alg}{\mathscr{A}}
\newcommand{\blg}{\mathscr{B}}
\newcommand{\clg}{\mathscr{C}}
\newcommand{\dlg}{\mathscr{D}}
\newcommand{\lle}{\left[\!\left[}              
\newcommand{\rre}{\right]\!\right]}       
\newtheorem{theorem}{Theorem}
\newtheorem{proposition}[theorem]{Proposition}

\bigskip

In this paper, we answer a question asked in \cite{FW09} that concerns the products of Geck-Rouquier conjugacy classes in the Hecke algebras $\IH_{n,q}$. If $\lambda=(\lambda_{1},\lambda_{2},\ldots,\lambda_{r})$ is a partition with $|\lambda| +\ell(\lambda)\leq n$, we consider the completed partition
$$\lambda\rightarrow n=(\lambda_{1}+1,\lambda_{2}+1,\ldots,\lambda_{r}+1,1^{n-|\lambda|-\ell(\lambda)}),$$
and we denote by $C_{\lambda,n}=C_{\lambda \rightarrow n}$ the corresponding conjugacy class, that is to say, the sum of all permutations with cycle type $\lambda \rightarrow n$ in the   center of the symmetric group algebra $\C\sym_{n}$. Notice that in particular, $C_{\lambda,n}=0$ if $|\lambda|+\ell(\lambda)> n$. It is known since \cite{FH59} that the products of completed conjugacy classes write as
$$C_{\lambda,n}\,*\,C_{\mu,n}=\sum_{|\nu| \leq |\lambda|+|\mu|}\,a_{\lambda\mu}^{\nu}(n)\,C_{\nu, n},$$
where the structure constants $a_{\lambda\mu}^{\nu}(n)$ depend on $n$ in a polynomial way. In \cite{GR97}, some deformations $\Gamma_{\lambda}$ of the conjugacy classes $C_{\lambda}$ are constructed. These central elements form a basis of the center $\Zen_{n,q}$ of the Iwahori-Hecke algebra $\IH_{n,q}$, and they are characterized by the two following properties, see \cite{Fra99}:
\begin{enumerate}
\item The element $\Gamma_{\lambda}$ is central and specializes to $C_{\lambda}$ for $q=1$.\vspace{-2mm}
\item The difference $\Gamma_{\lambda}-C_{\lambda}$ involves no permutation of minimal length in its conjugacy class.
\end{enumerate}
As before, $\Gamma_{\lambda,n}=\Gamma_{\lambda \rightarrow n}$ if $|\lambda|+\ell(\lambda) \leq n$, and $0$ otherwise. Our main result is the following:

\begin{theorem}\label{holdupmental}
In the center of the Hecke algebra $\IH_{n,q}$, the products of completed Geck-Rouquier conjugacy classes write as
$$\Gamma_{\lambda,n}\,*\,\Gamma_{\mu,n}=\sum_{|\nu| \leq |\lambda|+|\mu|}\,a_{\lambda\mu}^{\nu}(n,q)\,\,\Gamma_{\nu,n},$$
and the structure constants $a_{\lambda\mu}^{\nu}(n,q)$ are in $\Q[n,q,q^{-1}]$.
\end{theorem}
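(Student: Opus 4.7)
The plan is to mimic the Ivanov--Kerov partial permutation approach in the Iwahori--Hecke setting, so that the polynomial dependence of $a_{\lambda\mu}^{\nu}(n,q)$ in $n$ becomes a structural property of a single algebra sitting above all the $\IH_{n,q}$. Concretely, I would introduce a notion of \emph{composed permutation}: roughly, a pair $(T,\sigma)$ where $T\subset\lle 1,N\rre$ is a finite ``support'' and $\sigma$ is an element of the parabolic subgroup $\sym_{T}$ (or, at the Hecke level, a basis element of the parabolic subalgebra $\IH_{T,q}$). Multiplication is defined by first enlarging supports to $T\cup T'$ and then multiplying inside $\IH_{T\cup T',q}$, with a carefully chosen $q$-weight that ensures the projection $\pr_{n}$, sending $(T,\sigma)$ to $\sigma$ if $T\subseteq\lle 1,n\rre$ and to $0$ otherwise, is an algebra morphism onto $\IH_{n,q}$. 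Then I would form the projective limit $\IH_{\infty,q}$ of the $\IH_{n,q}$ along these morphisms, inside the algebra of composed permutations of bounded support growth.

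Next, I would build, for every partition $\lambda$, a canonical element $\Gamma_{\lambda,\infty}\in\IH_{\infty,q}$ whose $n$-th component is exactly $\Gamma_{\lambda,n}$. The natural candidate is the formal sum, over all supports $T$ of size $|\lambda|+\ell(\lambda)$, of the Geck--Rouquier element $\Gamma_{\lambda\to T}$ in $\IH_{T,q}$, suitably normalized; the two characterizing properties of Geck--Rouquier elements recalled in the introduction guarantee compatibility of these local pieces under the projections $\pr_{n}$. Once this is done, the product $\Gamma_{\lambda,\infty}*\Gamma_{\mu,\infty}$ lives in $\IH_{\infty,q}$ and can be expanded on the family $(\Gamma_{\nu,\infty})_{\nu}$ with coefficients $b_{\lambda\mu}^{\nu}(q)\in\Q[q,q^{-1}]$ that do not depend on $n$ at all.

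The remaining step is to go from this $n$-free expansion in the projective limit back to the statement for fixed $n$. The point is that for a given $n$, only finitely many supports $T\subseteq\lle 1,n\rre$ of size $|\nu|+\ell(\nu)$ contribute when we project, and the number of such supports is a polynomial in $n$ (essentially a binomial coefficient). Combining this counting with the fact that $\Gamma_{\nu,n}=0$ as soon as $|\nu|+\ell(\nu)>n$, the coefficient of $\Gamma_{\nu,n}$ in $\pr_{n}(\Gamma_{\lambda,\infty}*\Gamma_{\mu,\infty})$ is $b_{\lambda\mu}^{\nu}(q)$ multiplied by an explicit polynomial in $n$, hence lies in $\Q[n,q,q^{-1}]$; the bound $|\nu|\leq|\lambda|+|\mu|$ follows from a filtration of $\IH_{\infty,q}$ by the size $|\lambda|$ (a degree function respected by the multiplication of composed permutations).

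The main obstacle I anticipate is the very first step: finding the correct $q$-deformed multiplication of composed permutations so that (i) the projections $\pr_{n}:\IH_{\infty,q}\to\IH_{n,q}$ are genuine algebra morphisms, (ii) the filtration by support size behaves well, and (iii) the Geck--Rouquier elements admit coherent lifts $\Gamma_{\lambda,\infty}$. In the undeformed case ($q=1$) the combinatorics of supports is transparent, but in the Hecke algebra the braid relations mix different parabolic subalgebras, and the ``minimal length'' characterization of $\Gamma_{\lambda}$ has to be used delicately to show that enlarging the support $T$ to $T\cup T'$ does not spoil the defining properties. Once this compatibility is established, the polynomiality statement of Theorem~\ref{holdupmental} should follow quite formally.
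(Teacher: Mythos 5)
Your overall strategy (a projective limit of Hecke algebras in the spirit of Ivanov--Kerov, generic computations upstairs, projection downstairs) is indeed the paper's strategy, but there are two genuine gaps at exactly the places where the real work happens. The first is your plan to lift the Geck--Rouquier elements directly: you propose $\Gamma_{\lambda,\infty}$ as a sum of local pieces $\Gamma_{\lambda\to T}$ over supports $T$, and assert that the two characterizing properties of the $\Gamma_{\lambda}$ ``guarantee compatibility'' under the projections. This is precisely the step the paper declares out of reach: it explicitly does \emph{not} construct generic elements $F_{\lambda}$ with $\pr_{\infty,n}(F_{\lambda})=\Gamma_{\lambda,n}$, because the $\Gamma_{\lambda}$ are defined only implicitly (centrality plus a minimal-length condition) and there is no evident way to make the local pieces cohere when the support grows. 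The paper's workaround, entirely absent from your proposal, is to lift a \emph{different} basis of $\Zen_{n,q}$ --- Lascoux's norm basis $N_{\lambda}$ --- which does admit explicit generic preimages $M_{\lambda}$ in the limit algebra; the product $M_{\lambda}*M_{\mu}=\sum_{\nu}g_{\lambda\mu}^{\nu}(q)\,M_{\nu}$ then has $n$-free coefficients, and the passage back to the $\Gamma$ basis uses the fact that the transition matrices between $(N_{\lambda,n})$ and $(\Gamma_{\lambda,n})$ mirror the monomial-to-elementary transition in symmetric functions, with entries polynomial in $n$ after completion (Proposition \ref{psychosocial}). That symmetric-function step is what actually separates the $n$-dependence from the $q$-dependence; without it your coefficient is not just ``$b_{\lambda\mu}^{\nu}(q)$ times a binomial.''

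The second gap is in the underlying combinatorial object. Arbitrary subsets $T$ cannot serve as supports in the Hecke setting: already $s_{2}s_{3}s_{2}=1432$ shows that a reduced word forces the support to contain a whole interval, so one must use compositions (connected supports), and the quadratic relation has to be deformed to $(S_{i})^{2}=(q-1)S_{i}+q\,I_{i}$ with new central idempotents $I_{i}$ recording which adjacent positions have been glued. You flag this as ``the main obstacle'' but leave it unresolved, and resolving it is a substantial part of the paper (Theorem \ref{amnesia} identifies the resulting algebra with $\bigoplus_{c}\IH_{c,q}$). Finally, your proposed filtration argument for the bound $|\nu|\leq|\lambda|+|\mu|$ is one the author reports being unable to make work on $\dlg_{\infty,q}$; in the paper this bound is instead imported from the Francis--Wang theorem and propagated through the transition-matrix computation.
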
\bigskip

The first part of Theorem \ref{holdupmental} --- that is to say, that elements $\Gamma_{\nu,n}$ involved in the product satisfy the inequality $|\nu| \leq |\lambda|+|\mu|$ --- was already in \cite[Theorem 1.1]{FW09}, and the polynomial dependance of the coefficients $a_{\lambda\mu}^{\nu}(n,q)$ was Conjecture 3.1;  our paper is devoted to a proof of this conjecture. We shall combine two main arguments: 
\begin{itemize}
\item We construct a projective limit $\dlg_{\infty,q}$ of the Hecke algebras, which is essentially a $q$-version of the algebra of Ivanov and Kerov, see \cite{IK99}. We perform \emph{generic computations} inside various subalgebras of $\dlg_{\infty,q}$, and we project then these calculations on the algebras $\IH_{n,q}$ and their centers. \vspace{-2mm}
\item The centers of the Hecke algebras admit numerous bases, and these bases are related one to another in the same way as the bases of the symmetric function algebra $\Lambda$. This allows to separate the dependance on $q$ and the dependance on $n$ of the coefficients $a_{\lambda\mu}^{\nu}(n,q)$.
\end{itemize}
Before we start, let us fix some notations. If $n$ is a non-negative integer, $\Part_{n}$ is the set of partitions of $n$, $\Comp_{n}$ is the set of compositions of $n$, and $\sym_{n}$ is the set of permutations of the interval $\lle 1,n\rre$. The \textbf{type} of a permutation $\sigma \in \sym_{n}$ is the partition $\lambda=t(\sigma)$ obtained by ordering the sizes of the orbits of $\sigma$; for instance, $t(24513)=(3,2)$. The \textbf{code} of a composition $c\in \Comp_{n}$ is the complementary in $\lle 1,n \rre$ of the set of descents of $c$; for instance, the code of $(3,2,3)$ is $\{1,2,4,6,7\}$. Finally, we denote by $\Zen_{n}=Z(\C\sym_{n})$ the center of the algebra $\C\sym_{n}$; the conjugacy classes $C_{\lambda}$ form a linear basis of $\Zen_{n}$  when $\lambda$ runs over $\Part_{n}$.\bigskip

\section{Partial permutations and the Ivanov-Kerov algebra}
Since our argument is essentially inspired by the construction of \cite{IK99}, let us recall it briefly. A \textbf{partial permutation} of order $n$ is a pair $(\sigma,S)$ where $S$ is a subset of $\lle 1,n\rre$, and $\sigma$ is a permutation in $\sym(S)$. Alternatively, one may see a partial permutation as a permutation $\sigma$ in $\sym_{n}$ together with a subset containing the non-trivial orbits of $\sigma$. The product of two partial permutations is
$$(\sigma,S) \,(\tau,T)=(\sigma\tau,S\cup T),$$
and this operation yield a semigroup whose complex algebra is denoted by $\blg_{n}$. There is a natural projection $\pr_{n} : \blg_{n}\to \C\sym_{n}$ that consists in forgetting the support of a partial permutation, and also natural compatible maps
$$\phi_{N,n} :(\sigma,S) \in \blg_{N}\mapsto
 \begin{cases}(\sigma,S) \in \blg_{n}&\text{if }S\subset\lle 1,n\rre,
\\0&\text{otherwise,}
\end{cases}$$
whence a projective limit $\blg_{\infty}=\varprojlim \blg_{n}$ with respect to this system $(\phi_{N,n})_{N\geq n}$ and in the category of filtered algebras. Now, one can lift the conjugacy classes $C_{\lambda}$ to the algebras of partial permutations. Indeed, the symmetric group $\sym_{n}$ acts on $\blg_{n}$ by 
$$\sigma\cdot(\tau,S)=(\sigma\tau\sigma^{-1},\sigma(S)),$$
and a linear basis of the invariant subalgebra $\alg_{n}=(\blg_{n})^{\sym_{n}}$ is labelled by the partitions $\lambda$ of size less than or equal to $n$:
$$\alg_{n}=\bigoplus_{|\lambda| \leq n}\C A_{\lambda,n},\qquad\text{where }A_{\lambda,n}=\!\!\!\!\sum_{\substack{|S|=|\lambda|\\
\sigma \in \sym(S),\,\,t(\sigma)=\lambda}} \!\!\!\!(\sigma,S).$$
Since the actions $\sym_{n} \curvearrowright \blg_{n}$ are compatible with the morphisms $\phi_{N,n}$,  the inverse limit $\alg_{\infty}=(\blg_{\infty})^{\sym_{\infty}}$ of the invariant subalgebras has a basis $(A_{\lambda})_{\lambda}$ indexed by all partitions $\lambda \in \Part=\bigsqcup_{n \in \N} \,\Part_{n}$, and such that $\phi_{\infty,n}(A_{\lambda})=A_{\lambda,n}$ (with by convention $A_{\lambda,n}=0$ if $|\lambda|>n$). As a consequence, if $(a_{\lambda\mu}^{\nu})_{\lambda,\mu,\nu}$ is the family of structure constants of the \textbf{Ivanov-Kerov algebra}\footnote{It can be shown that $\alg_{\infty}$ is isomorphic to the algebra of shifted symmetric polynomials, see Theorem 9.1 in \cite{IK99}.} $\alg_{\infty}$ in the basis $(A_{\lambda})_{\lambda \in \Part}$, then
$$\forall n,\,\,\,A_{\lambda,n}\,*\,A_{\mu,n}=\sum_{\nu} \,a_{\lambda\mu}^{\nu}\,A_{\nu,n},$$
with $A_{\lambda,n}=0$ if $|\lambda|\geq n$. Moreover, it is not difficult to see that $a_{\lambda\mu}^{\nu}\neq 0$ implies $|\nu|\leq |\lambda|+|\mu|$, and also $|\nu|-\ell(\nu)\leq |\lambda|-\ell(\lambda)+|\mu|-\ell(\mu)$, \emph{cf.} \cite[\S10]{IK99}, for the study of the filtrations of $\alg_{\infty}$. Now, $\pr_{n}(\alg_{n})=\Zen_{n}$, and more precisely, $$\pr_{n}(A_{\lambda,n})=\binom{n-|\lambda|+m_{1}(\lambda)}{m_{1}(\lambda)}\,C_{\lambda-1,n}.$$
where $\lambda-1=(\lambda_{1}-1,\ldots,\lambda_{s}-1)$ if $\lambda=(\lambda_{1},\ldots,\lambda_{s}\geq 2,1,\ldots,1)$. The result of Farahat and Higman follows immediately, and we shall try to mimic this construction in the context of Iwahori-Hecke algebras.\bigskip

\section{Composed permutations and their Hecke algebra}
We recall that the \textbf{Iwahori-Hecke algebra} of type A and order $n$ is the quantized version of the symmetric group algebra defined over $\C(q)$ by
$$\IH_{n,q}=\left\langle S_{1},\ldots,S_{n-1}\,\,\bigg|\,\,\substack{\,\, \text{braid relations: }\forall i,\,\,S_{i}S_{i+1}S_{i}=S_{i+1}S_{i}S_{i+1}\quad\\
\text{commutation relations: }\forall |j-i|>1,\,\,S_{i}S_{j}=S_{j}S_{i} \\
\text{quadratic relations: }\forall i,\,\,(S_{i})^{2}=(q-1)\,S_{i}+q\quad\,
 }\right\rangle.$$
When $q=1$, we recover the symmetric group algebra $\C\sym_{n}$. If $\omega \in \sym_{n}$, let us denote by $T_{\omega}$ the product $S_{i_{1}}S_{i_{2}}\cdots S_{i_{r}}$, where $\omega=s_{i_{1}}s_{i_{2}}\cdots s_{i_{r}}$ is any reduced expression of $\omega$ in elementary transpositions $s_{i}=(i,i+1)$. Then, it is well known that the elements $T_{\omega}$ do not depend on the choice of reduced expressions, and that they form a $\C(q)$-linear basis of $\IH_{n,q}$, see \cite{Mat99}.\bigskip

In order to construct a \emph{projective} limit of the algebras $\IH_{n,q}$, it is very tempting to mimic the construction of Ivanov and Kerov, and therefore to build an Hecke algebra of partial permutations. Unfortunately, this is not possible; let us explain why by considering for instance the transposition $\sigma=1432$ in $\sym_{4}$. The possible supports for $\sigma$ are $\{2,4\}$, $\{1,2,4\}$, $\{2,3,4\}$ and $\{1,2,3,4\}$. However,
$$\sigma=s_{2}s_{3}s_{2},$$
and the support of $s_{2}$ (respectively, of $s_{3}$) contains at least $\{2,3\}$ (resp., $\{3,4\}$). So, if we take account of the Coxeter structure of $\sym_{4}$ --- and it should obviously be the case in the context of Hecke algebras --- then the only valid supports for $\sigma$ are the connected ones, namely, $\{2,3,4\}$ and $\{1,2,3,4\}$. This problem leads to consider \emph{composed permutations} instead of \emph{partial permutations}. If $c$ is a composition of $n$, let us denote by $\pi(c)$ the corresponding set partition of $\lle 1,n\rre$, \emph{i.e.}, the set partition whose parts are the intervals $\lle 1,c_{1}\rre,\,\,\lle c_{1}+1,c_{1}+c_{2}\rre$, etc. A \textbf{composed permutation} of order $n$ is a pair $(\sigma,c)$ with $\sigma \in \sym_{n}$ and $c$ composition in $\Comp_{n}$ such that $\pi(c)$ is coarser than the set partition of orbits of $\sigma$. For instance, $(32154867,(5,3))$ is a composed permutation of order $8$; we shall also write this $32154|867$. The product of two composed permutations is then defined by
$$(\sigma,c)\,(\tau,d)=(\sigma\tau,c\vee d),$$
where $c\vee d$ is the finest composition of $n$ such that $\pi(c \vee d)\geq \pi(c)\vee \pi(d)$ in the lattice of set partitions. For instance,
$$321|54|867 \,\times\, 12|435|687 = 42153|768.$$
One obtains so a semigroup of composed permutations; its complex semigroup algebra will be denoted by\footnote{If one considers pairs $(\sigma,\pi)$ where $\pi$ is any set partition of $\lle 1,n\rre$ coarser than $\mathrm{orb}(\sigma)$ (and not necessarily a set partition associated to a composition), then one obtains an algebra of \emph{split permutations} whose subalgebra of invariants is related to the connected Hurwitz numbers $H_{n,g}(\lambda)$.} $\dlg_{n}$, and the dimension of $\dlg_{n}$ is the number of composed permutations of order $n$.
\bigskip

Now, let us describe an Hecke version $\dlg_{n,q}$ of the algebra $\dlg_{n}$. As for $\IH_{n,q}$, one introduces generators $(S_{i})_{1\leq i \leq n-1}$ corresponding to the elementary transpositions $s_{i}$, but one has also to introduce generators $(I_{i})_{1\leq i \leq n-1}$ that allow to join the parts of the composition of a composed permutation. Hence, the \textbf{Iwahori-Hecke algebra of composed permutations} is defined (over the ground field $\C(q)$) by $\dlg_{n,q}=\langle S_{1},\ldots,S_{n-1},I_{1},\ldots,I_{n-1}\rangle$ and the following relations:
\begin{align*}
&\forall i,\,\,\,S_{i}S_{i+1}S_{i}=S_{i+1}S_{i}S_{i+1}\\
&\forall |j-i|>1,\,\,\,S_{i}S_{j}=S_{j}S_{i}\\
&\forall i,\,\,\,(S_{i})^{2}=(q-1)\,S_{i}+q\,I_{i}\\
&\forall i,j,\,\,\,S_{i}I_{j}=I_{j}S_{i} \\
&\forall i,j,\,\,\,I_{i}I_{j}=I_{j}I_{i}\\
&\forall i,\,\,\,S_{i}I_{i}=S_{i}\\
&\forall i,\,\,\,(I_{i})^{2}=I_{i}
\end{align*}
The generators $S_{i}$ correspond to the composed permutations $1|2|\ldots|i-1|i+1,i|i+2|\ldots|n$, and the generators $I_{i}$ correspond to the composed permutations $1|2|\ldots|i-1|i,i+1|i+2|\ldots|n$.

\begin{proposition}\label{psychoactive}
The algebra $\dlg_{n,q}$ specializes to the algebra of composed permutations $\dlg_{n}$ when $q=1$; to the Iwahori-Hecke algebra $\IH_{n,q}$ when $I_{1}=I_{2}=\cdots=I_{n-1}=1$; and to the algebra $\dlg_{m,q}$ of lower order $m<n$ when $I_{m}=I_{m+1}=\cdots=I_{n-1}=0$ and $S_{m}=S_{m+1}=\cdots=S_{n-1}=0$.
\end{proposition}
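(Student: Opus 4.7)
The proof amounts to verifying three independent specializations at the level of presentations.

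For $I_i = 1$: the relations $I_iI_j = I_jI_i$, $(I_i)^2 = I_i$, $S_iI_j = I_jS_i$ and $S_iI_i = S_i$ all collapse to tautologies, the quadratic relation reduces to $(S_i)^2 = (q-1)S_i + q$, and the braid and commutation relations are unchanged; what remains is exactly the presentation of $\IH_{n,q}$. For $S_m = \cdots = S_{n-1} = 0$ and $I_m = \cdots = I_{n-1} = 0$: every relation involving a vanishing generator reduces to $0 = 0$, and the relations among $\{S_i, I_i : i < m\}$ are literally the defining relations of $\dlg_{m,q}$.

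The only substantive point is the specialization at $q=1$. I would define a homomorphism $\phi : \dlg_{n,1} \to \dlg_n$ by $S_i \mapsto (s_i, c_i)$ and $I_i \mapsto (e, c_i)$, where $c_i$ is the composition of $n$ whose unique non-trivial part is $\{i, i+1\}$, then check the seven relations one by one inside the semigroup of composed permutations; the only non-routine ones are $(S_i)^2 \mapsto (s_i^2, c_i \vee c_i) = (e, c_i) = I_i$, matching the specialized quadratic relation, and $S_iI_i \mapsto (s_i, c_i \vee c_i) = (s_i, c_i)$. To upgrade $\phi$ to an isomorphism, I would bound $\dim \dlg_{n,1} \leq \dim \dlg_n$ by reducing any monomial to a normal form $T_\omega \cdot \prod_{j \in J} I_j$: use $S_iI_j = I_jS_i$ to push all $I_j$ to the right, collapse them via $(I_j)^2 = I_j$ and $I_iI_j = I_jI_i$, and reduce the remaining $S$-word by braid and commutation moves, using $(S_i)^2 = I_i$ at $q=1$ to absorb each square into $J$. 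Since $\phi$ sends these normal forms bijectively onto the linear basis of $\dlg_n$ indexed by composed permutations, it must be an isomorphism.

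The main obstacle is the well-definedness of this normal form at $q=1$: one must check that the rewriting procedure terminates at a unique element of $\dlg_{n,1}$ regardless of the reductions chosen, which is the $q=1$ analogue of Matsumoto's theorem for Coxeter groups. The cleanest alternative, which I would prefer, is to construct the left regular representation of $\dlg_{n,q}$ on the free $\C(q)$-module with basis indexed by the semigroup of composed permutations, write explicit $q$-deformed action formulas for $S_i$ and $I_i$ analogous to the Iwahori-Hecke case, and verify the seven relations directly; this yields $\dim \dlg_{n,q} \leq \dim \dlg_n$ at once and, upon specialization at $q=1$, provides the inverse of $\phi$.
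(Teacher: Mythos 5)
Your treatment of the second and third specializations matches the paper's (which dismisses them as trivial), but for the $q=1$ case you take a genuinely different route from the paper, and your primary version of it has the gap you yourself flag. The paper does not prove $\dlg_{n,1}\cong\dlg_n$ by a confluence argument or by a regular representation; it derives it from Theorem \ref{amnesia}, which splits the work into two halves. Spanning of the $T_{\sigma,c}$ is obtained by your rewriting procedure (push the $I_j$ right, enlarge the code using $S_iI_i=S_i$, shorten non-reduced $S$-words with the quadratic relation), but crucially this only needs \emph{termination}, not uniqueness of the normal form, so no Matsumoto-type theorem for $\dlg_{n,q}$ is required. Linear independence is then obtained not by a representation but by exhibiting the morphisms $\psi_c:\dlg_{n,q}\to\IH_{c,q}$ (sending $S_i,I_i$ to $S_i,1$ if $i$ is in the code of $c$ and to $0$ otherwise) and showing via M\"obius inversion on the composition lattice that $\psi=\bigoplus_c\psi_c$ is surjective onto $\bigoplus_{c\in\Comp_n}\IH_{c,q}$, whose dimension is exactly the number of composed permutations; this reuses the known basis theorem for $\IH_{c,q}$ instead of reproving one. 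Your preferred alternative --- the left regular representation on the free module indexed by composed permutations, with $q$-deformed action formulas --- would also work and is the classical Bourbaki-style path, but it requires verifying the braid relation for the explicit operators, which is precisely the labor the paper's $\psi$ avoids. Two smaller points: (i) your normal form should be $T_\omega I_c$ with the code of $c$ containing the support of a reduced word of $\omega$ (forced by $S_iI_i=S_i$), otherwise the pairs $(\omega,J)$ overcount and your dimension bound is not $\dim\dlg_n$; (ii) once Theorem \ref{amnesia} is in hand, the $q=1$ statement follows because the structure constants in the basis $(T_{\sigma,c})$ lie in $\Z[q]$ and the rewriting shows $T_{\sigma,c}\,T_{\tau,d}$ specializes at $q=1$ to $T_{\sigma\tau,c\vee d}$, so no separate homomorphism $\phi$ need be constructed.
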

\bigskip

In the following, we shall denote by $\pr_{n}$ the specialization $\dlg_{n,q} \to \IH_{n,q}$; it generalizes the map $\dlg_{n}\to \C\sym_{n}$ of the first section. The first part of Proposition \ref{psychoactive} is actually the only one that is non trivial, and it will be a consequence of Theorem \ref{amnesia}. If $\omega$ is a permutation with reduced expression $\omega=s_{i_{1}}s_{i_{2}}\cdots s_{i_{r}}$, we denote as before by $T_{\omega}$ the product $S_{i_{1}}S_{i_{2}}\ldots S_{i_{r}}$ in $\dlg_{n,q}$. On the other hand, if $c$ is a composition of $\lle 1,n\rre$,  we denote by $I_{c}$ the product of the generators $I_{j}$ with $j$ in the code of $c$ (so for instance, $I_{(3,2,3)}=I_{1}I_{2}I_{4}I_{6}I_{7}$ in $\dlg_{8,q}$). These elements are central idempotents, and $I_{c}$ correspond to the composed permutation $(\mathrm{id},c)$. Finally, if $(\sigma,c)$ is a composed permutation, $T_{\sigma,c}$ is the product $T_{\sigma}I_{c}$.

\begin{theorem}\label{amnesia}
In $\dlg_{n,q}$, the products $T_{\sigma}$ do not depend on the choice of reduced expressions, and the products $T_{\sigma,c}$ form a linear basis of $\dlg_{n,q}$ when $(\sigma,c)$ runs over composed permutations of order $n$. There is an isomorphism of $\C(q)$-algebras between 
$$\dlg_{n,q} \quad\text{and}\quad\bigoplus_{c \in \Comp_{n}} \IH_{c,q},$$
where $\IH_{c,q}$ is the Young subalgebra $\IH_{c_{1},q}\otimes \IH_{c_{2},q}\otimes\cdots \otimes \IH_{c_{r},q}$ of $\IH_{n,q}$.
\end{theorem}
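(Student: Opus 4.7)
The plan is to build, in parallel, an isomorphism $\phi:\dlg_{n,q}\to\bigoplus_{c\in\Comp_n}\IH_{c,q}$ and its inverse $\psi$, from which both the well-definedness of $T_\sigma$ and the basis $\{T_{\sigma,c}\}$ will follow. Well-definedness of $T_\sigma$ is in fact automatic: the generators $S_i$ of $\dlg_{n,q}$ satisfy the braid and commutation relations, so by Matsumoto's theorem on reduced expressions in $\sym_n$ the product $S_{i_1}\cdots S_{i_r}$ depends only on $\sigma=s_{i_1}\cdots s_{i_r}$, exactly as in the classical Iwahori-Hecke setting.

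The relations $I_iI_j=I_jI_i$, $(I_i)^2=I_i$ and $S_iI_j=I_jS_i$ make the subalgebra generated by the $I_i$ central, commutative and Boolean. For each composition $c\in\Comp_n$ with code $J(c)$, set
$$E_c=\prod_{i\in J(c)}I_i\,\prod_{i\notin J(c)}(1-I_i);$$
the family $(E_c)_{c\in\Comp_n}$ consists of mutually orthogonal central idempotents summing to $1$. The identity $S_i(1-I_i)=S_i-S_iI_i=0$ forces $S_iE_c=0$ as soon as $i\notin J(c)$, while $I_iE_c=E_c$ if $i\in J(c)$ and vanishes otherwise. Define $\phi$ on generators by sending $S_i$ to the element with $c$-component $T_{s_i}$ for $i\in J(c)$ and $0$ otherwise, and $I_i$ to the idempotent with $c$-component $1$ for $i\in J(c)$ and $0$ otherwise. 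A componentwise check against the seven defining relations of $\dlg_{n,q}$ is routine: the only nontrivial case is the quadratic relation $(S_i)^2=(q-1)S_i+qI_i$, which reduces to the Hecke identity $(T_{s_i})^2=(q-1)T_{s_i}+q$ inside each $\IH_{c,q}$ with $i\in J(c)$, and to $0=0$ otherwise. In the reverse direction, send each generator $T_{s_j}$ of $\IH_{c,q}$ (for $j\in J(c)$) to $S_jE_c\in\dlg_{n,q}$; centrality of $E_c$ together with the relations of $\dlg_{n,q}$ gives a homomorphism $\psi_c$, and assembling these yields a homomorphism $\psi:\bigoplus_c\IH_{c,q}\to\dlg_{n,q}$, since the images land in the orthogonal subspaces $E_c\dlg_{n,q}$. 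The computation $\sum_{c\,:\,i\in J(c)}E_c=I_i$ (obtained by grouping orthogonal idempotents) gives $\psi\circ\phi=\mathrm{id}$ on generators, and $\phi(E_c)$ being the unit of the summand $\IH_{c,q}$ and zero elsewhere gives $\phi\circ\psi=\mathrm{id}$, so $\phi$ and $\psi$ are mutually inverse.

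The standard bases $\{T_\omega\}_{\omega\in\sym_c}$ of the summands transport via $\psi$ to a basis $\{T_\omega E_c\}$ of $\dlg_{n,q}$ indexed precisely by composed permutations. To recover the $\{T_{\sigma,c}=T_\sigma I_c\}$ of the statement, expand $I_c=\prod_{i\in J(c)}I_i$ in the orthogonal family; inclusion-exclusion yields $I_c=\sum_{c'\geq c}E_{c'}$, where $c'\geq c$ means $J(c')\supset J(c)$. Since $\sym_c\subset\sym_{c'}$ whenever $c'\geq c$, we obtain $T_{\sigma,c}=\sum_{c'\geq c}T_\sigma E_{c'}$, a unitriangular expansion in the refinement order on compositions, so $\{T_{\sigma,c}\}$ is a basis as well. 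The main difficulty is purely organizational: reconciling the orthogonal idempotents $E_c$ (which carry the summand decomposition) with the non-orthogonal $I_c$ (which correspond to the composed permutations $(\mathrm{id},c)$) is done cleanly by the inclusion-exclusion identity above.
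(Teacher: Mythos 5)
Your proof is correct, and for the substantive half of the theorem it takes a genuinely different route from the paper. Both arguments use Matsumoto's theorem for the well-definedness of $T_{\sigma}$ and both construct the same homomorphism $\dlg_{n,q}\to\bigoplus_{c}\IH_{c,q}$ (your $\phi$ is the paper's $\psi=\bigoplus_{c}\psi_{c}$, defined by the same formulas on generators). The difference is in how bijectivity and the basis statement are obtained. The paper first proves that the $T_{\sigma,c}$ \emph{span} $\dlg_{n,q}$ by a rewriting induction on word length (using the quadratic relation to shorten non-reduced words, which incidentally shows the structure constants lie in $\Z[q]$), then proves surjectivity of $\psi$ by M\"obius inversion on the composition lattice, and finally concludes by comparing cardinalities: a spanning family of size $\dim\dlg_{n}$ mapping onto a space of that dimension must be a basis. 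You instead introduce the orthogonal central idempotents $E_{c}=\prod_{i\in J(c)}I_{i}\prod_{i\notin J(c)}(1-I_{i})$, observe that $S_{i}E_{c}=0$ for $i\notin J(c)$ and that the corner algebras $E_{c}\dlg_{n,q}$ receive unital maps from $\IH_{c,q}$, and assemble an explicit two-sided inverse; the identities $\sum_{c:\,i\in J(c)}E_{c}=I_{i}$ and $\phi(E_{c})=e_{c}$ then give $\psi\circ\phi=\mathrm{id}$ and $\phi\circ\psi=\mathrm{id}$ on generators. This buys you the linear independence of the $T_{\sigma,c}$ for free (transport the standard bases and invert the unitriangular relation $T_{\sigma,c}=\sum_{c'\geq c}T_{\sigma}E_{c'}$, which is exactly the inverse of the paper's M\"obius-inversion formula), so you need neither the rewriting lemma nor the dimension count. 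The only points worth making explicit in a final write-up are (i) that $\dlg_{n,q}$ is unital, so that $1-I_{i}$ and the resolution of identity $\sum_{c}E_{c}=1$ make sense, and (ii) that the presentation of the Young subalgebra $\IH_{c,q}$ as a tensor product of Hecke algebras is generated by the $T_{s_{j}}$ with $j$ in the code of $c$, with braid relations only for consecutive $j,j+1$ both in the code --- which is what makes your componentwise verification of the relations, and the well-definedness of each $\psi_{c}$, go through.
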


\begin{proof}
If $\sigma\in \sym_{n}$, the Matsumoto theorem ensures that it is always possible to go from a reduced expression $s_{i_{1}}s_{i_{2}}\cdots s_{i_{r}}$ to another reduced expression $s_{j_{1}}s_{j_{2}}\cdots s_{j_{r}}$ by braid moves $s_{i}s_{i+1}s_{i} \leftrightarrow s_{i+1}s_{i}s_{i+1}$
and commutations $s_{i}s_{j}  \leftrightarrow s_{j}s_{i}$ when $|j-i|>1$. Since the corresponding products of $S_{i}$ in $\dlg_{n,q}$ are preserved by these substitutions, a product $T_{\sigma}$ in $\dlg_{n,q}$ does not depend on the choice of a reduced expression. Now, let us consider an arbitrary product $\Pi$ of generators $S_{i}$ and $I_{j}$ (in any order). As the elements $I_{j}$ are central idempotents, it is always possible to reduce the product to
$$\Pi=S_{i_{1}}S_{i_{2}}\cdots S_{i_{p}}\,I_{c}$$
with $c$ composition of $n$ --- here, $s_{i_{1}}s_{i_{2}}\cdots s_{i_{p}}$ is \emph{a priori} not a reduced expression. Moreover, since $S_{i}\,I_{i}=S_{i}$, we can suppose that the code of $c$ contains $\{i_{1},\ldots,i_{p}\}$. Now, suppose that $\sigma=s_{i_{1}}s_{i_{2}}\cdots s_{i_{p}}$ is not a reduced expression. Then, by using braid moves and commutations, we can transform the expression in one with two consecutive letters that are identical, that is to say that if $j_k = j_{k+1}$,
$$\sigma=s_{j_{1}}\cdots s_{j_{k}}s_{j_{k+1}} \cdots s_{j_{p}}=s_{j_{1}}\cdots s_{j_{k-1}}s_{j_{k+2}}\cdots s_{j_{p}}.$$
We apply the same moves to the $S_{i}$ in $\dlg_{n,q}$ and we obtain $\Pi=S_{j_{1}}\cdots S_{j_{k}}S_{j_{k+1}}\cdots S_{j_{p}} \,I_{c}$; 
notice that the code of $c$ still contains $\{j_{1},\ldots,j_{p}\}=\{i_{1},\ldots,i_{p}\}$. By using the quadratic relation in  $\dlg_{n,q}$, we conclude that if $j_k = j_{k+1}$,
\begin{align*}
\Pi&=(q-1)\,S_{j_{1}}\cdots S_{j_{k-1}}S_{j_{k}}S_{j_{k+2}}\cdots S_{j_{p}}\,I_{c}+q\,S_{j_{1}}\cdots S_{j_{k-1}}I_{j_{k}}S_{j_{k+2}}\cdots S_{j_{p}}\,I_{c}\\
&=(q-1)\,S_{j_{1}}\cdots S_{j_{k-1}}S_{j_{k}}S_{j_{k+2}}\cdots S_{j_{p}}\,I_{c}+q\,S_{j_{1}}\cdots S_{j_{k-1}}S_{j_{k+2}}\cdots S_{j_{p}}\,I_{c}
\end{align*}
because $I_{j_{k}}I_{c}=I_{c}$. Consequently, by induction on $p$, any product $\Pi$ is a $\Z[q]$-linear combination of products $T_{\tau,c}$ (and with the same composition $c$ for all the terms of the linear combination). So, the reduced products $T_{\sigma,c}$ span linearly $\dlg_{n,q}$ when $(\sigma,c)$ runs over composed permutations of order $n$.
If $c$ is in $\Comp_{n}$, we define a morphism of $\C(q)$-algebras from $\dlg_{n,q}$ to $\IH_{c,q}$ by
$$\psi_{c}(S_{i})=\begin{cases} S_{i} &\text{if }i\text{ is in the code of }c,\\
0 &\text{otherwise,}
\end{cases}
\qquad;\qquad
\psi_{c}(I_{i})=\begin{cases} 1 &\text{if }i\text{ is in the code of }c,\\
0 &\text{otherwise.}
\end{cases}$$
The elements $\psi_{c}(S_{i})$ and $\psi_{c}(I_{i})$ sastify in $\IH_{c,q}$ the relations of the generators $S_{i}$ and $I_{i}$ in $\dlg_{n,q}$. So, there is indeed such a morphism of algebras $\psi_{c}:\dlg_{n,q}\to\IH_{c,q}$, and one has in fact $\psi_{c}(T_{\sigma,b})=T_{\sigma}$ if $\pi(b) \leq \pi(c)$, and $0$ otherwise.
Let us consider the direct sum of algebras $\IH_{\Comp_{n},q}=\bigoplus_{c \in \Comp_{n}} \IH_{c,q}$, and the direct sum of morphisms $\psi=\bigoplus_{c\in \Comp_{n}}\,\psi_{c}$. We denote the basis vectors $[0,0,\ldots,(T_{\sigma}\in \IH_{c,q}),\ldots,0]$ of $\IH_{\Comp_{n},q}$ by $T_{\sigma \in \IH_{c,q}}$; in particular,
$$\psi(T_{\sigma,c})=\sum_{d\geq c} T_{\sigma \in \IH_{d,q}}$$
for any composed permutation $(\sigma,c)$. As a consequence, the map $\psi$ is surjective, because 
$$\psi\left(\sum_{d\geq c}\mu(c,d) \,T_{\sigma,c} \right)=T_{\sigma \in \IH_{c,q}} $$
where $\mu(c,d)=\mu(\pi(c),\pi(d))=(-1)^{\ell(c)-\ell(d)}$ is the M\"obius function of the hypercube lattice of compositions. If $\sigma$ is a permutation, we denote by $\mathrm{orb}(\sigma)$ the set partition whose parts are the orbits of $\sigma$. Since the families $(T_{\sigma,c})_{\mathrm{orb}(\sigma)\leq \pi(c)}$ and $(T_{\sigma \in \IH_{c,q}})_{\mathrm{orb}(\sigma)\leq \pi(c)}$ have the same cardinality $\dim \dlg_{n}$, we conclude that
 $(T_{\sigma,c})_{\mathrm{orb}(\sigma)\leq \pi(c)}$ is a $\C(q)$-linear basis of $\dlg_{n,q}$ and that $\psi$ is an isomorphism of $\C(q)$-algebras.
\end{proof}\bigskip

Notice that the second part of Theorem \ref{amnesia} is the $q$-analog of Corollary 3.2 in \cite{IK99}. To conclude this part, we have to build the inverse limit $\dlg_{\infty,q}=\varprojlim \dlg_{n,q}$, but this is easy thanks to the specializations evoked in the third part of Proposition \ref{psychoactive}. Hence, if $\phi_{N,n} : \dlg_{N,q} \to \dlg_{n,q}$ is the map that sends the generators $I_{i\geq n}$ and $S_{i \geq n}$ to zero and that preserves the other generators, then $(\phi_{N,n})_{N \geq n}$ is a system of compatible maps, and these maps behave well with respect to the filtration $\deg T_{\sigma,c}=|\mathrm{code}(c)|$. Consequently, there is a projective limit $\dlg_{\infty,q}$ whose elements are the infinite linear combinations of $T_{\sigma,c}$, with $\sigma$ finite permutation in $\sym_{\infty}$ and $c$ infinite composition compatible with $\sigma$ and with almost all its parts of size $1$.  \bigskip

\noindent It is not true that two elements $x$ and $y$ in $\dlg_{\infty,q}$ are equal if and only if their projections $\pr_{n}(\phi_{\infty,n}(x))$ and  $\pr_{n}(\phi_{\infty,n}(y))$ are equal for all $n$: for instance,
$$T[21|34|5|6|\cdots]=S_{1}I_{1}I_{3} \quad\text{and}\quad T[2134|5|6|\cdots]=S_{1} I_{1}I_{2}I_{3}$$
have the same projections in all the Hecke algebras (namely, $S_{1}$ if $n\geq4$ and $0$ otherwise), but they are not equal. However, the result is true if we consider only the subalgebras $\dlg_{n,q}'\subset \dlg_{n,q}$ spanned by the $T_{\sigma,c}$ with $c=(k,1,\ldots,1)$ --- then, $\sigma$ may be considered as a partial permutation of $\lle 1,k\rre$.

\begin{proposition}\label{cheshirecat}
For any $n$, the vector space $\dlg_{n,q}'$ spanned by the $T_{\sigma,c}$ with $c=(k,1^{n-k})$ is a subalgebra of $\dlg_{n,q}$. In the inverse limit $\dlg_{\infty,q}'\subset \dlg_{\infty,q}$, the projections $\pr_{\infty,n}=\pr_{n}\circ \phi_{\infty,n}$ separate the vectors:
$$\forall x,y \in \dlg_{\infty,q}',\qquad\big(\forall n,\,\,\pr_{\infty,n}(x)=\pr_{\infty,n}(y)\big) \,\,\iff\,\, \big(x=y\big) .$$
\end{proposition}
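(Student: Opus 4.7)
My plan is to derive both claims from the special form $(k, 1^{n-k})$ of the compositions involved: the multiplicative structure of the idempotents $I_c$ gives the subalgebra property directly, and a triangular argument indexed by the largest point moved by a permutation gives the separation property.

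For the subalgebra claim, the descent set of $c = (k, 1^{n-k})$ is $\{k, k+1, \ldots, n-1\}$, so $I_c = I_1 I_2 \cdots I_{k-1}$. Given two basis vectors $T_{\sigma_1, c_1}$ and $T_{\sigma_2, c_2}$ with $c_i = (k_i, 1^{n-k_i})$, the orbits of $\sigma_i$ lie in $\lle 1, k_i \rre$ so any reduced expression of $\sigma_i$ uses only generators $S_j$ with $j < k_i$. Since the $I_j$ are central idempotents,
$$T_{\sigma_1, c_1}\,T_{\sigma_2, c_2}\,=\,T_{\sigma_1}T_{\sigma_2}\,I_{c_1}I_{c_2}\,=\,T_{\sigma_1}T_{\sigma_2}\,I_{c_1 \vee c_2},\qquad c_1 \vee c_2 = (\max(k_1, k_2),\,1^{n - \max(k_1, k_2)}).$$
Applying the rewriting procedure from the proof of Theorem \ref{amnesia} to normalize $T_{\sigma_1}T_{\sigma_2}$, each use of the quadratic relation introduces only idempotents $I_j$ with $j < \max(k_1, k_2)$, all of which are already absorbed into $I_{c_1 \vee c_2}$. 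The product therefore expands as a $\C(q)$-linear combination of $T_{\tau, c_1 \vee c_2}$ with $\tau \in \sym_{\max(k_1, k_2)}$, so $\dlg_{n,q}'$ is closed under multiplication.

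For separation, I would compute $\pr_{\infty, n}(T_{\sigma, (k, 1^\infty)})$ directly: if $k \leq n$ the morphism $\phi_{\infty, n}$ preserves the relevant generators and returns $T_{\sigma, (k, 1^{n-k})} \in \dlg_{n, q}$, which $\pr_n$ sends to $T_\sigma \in \IH_{n, q}$; if $k > n$ then $I_{k-1}$ is killed by $\phi_{\infty, n}$ and the image vanishes. Writing $z = x - y = \sum_{\sigma, k} b_{\sigma, k}\,T_{\sigma, (k, 1^\infty)}$, the coefficient of $T_\omega$ in $\pr_{\infty, n}(z)$ is therefore the partial sum $\sum_{k = k_\omega}^{n} b_{\omega, k}$, where $k_\omega$ is the smallest integer $\geq 1$ such that $\omega$ fixes every point strictly greater than $k_\omega$.

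Under the hypothesis that every projection $\pr_{\infty, n}(z)$ vanishes, these partial sums vanish for all $n \geq k_\omega$. Taking $n = k_\omega$ gives $b_{\omega, k_\omega} = 0$, and subtracting consecutive equations yields $b_{\omega, n} = 0$ for all $n > k_\omega$ by induction on $n$. As this holds for every permutation $\omega$, we conclude $z = 0$, that is, $x = y$. The only subtlety I anticipate is checking that the expansion of an element of $\dlg_{\infty, q}'$ in the basis $(T_{\sigma, (k, 1^\infty)})$ is genuinely unique, so that the coefficient extraction above is legitimate; this follows from the projective limit construction together with the filtration $\deg T_{\sigma, c} = |\mathrm{code}(c)|$ recalled just before the proposition.
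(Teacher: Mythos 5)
Your proof is correct and takes essentially the same route as the paper's: the subalgebra claim rests on $(k,1^{n-k})\vee(l,1^{n-l})=(\max(k,l),1^{n-\max(k,l)})$ together with the rewriting procedure of Theorem \ref{amnesia}, and the separation claim on computing the coefficient of $T_{\omega}$ in $\pr_{\infty,n}(z)$ as the partial sum $\sum_{k\leq n}b_{\omega,k}$ and telescoping. The paper phrases this last step as an induction showing $a_{\sigma,k+l}(x)=a_{\sigma,k+l}(y)$ for all $l$, which is exactly your subtraction of consecutive equations.
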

\begin{proof}
The supremum of two compositions $(k,1^{n-k})$ and $(l,1^{n-l})$ is $(m,1^{n-m})$ with $m=\max(k,l)$; consequently, $\dlg_{n,q}'$ is indeed a subalgebra of $\dlg_{n,q}$. Any element $x$ of the projective limit $\dlg_{\infty,q}'$ writes uniquely as
$$x=\sum_{k=0}^{\infty}\sum_{\sigma \in \sym_{k}} a_{\sigma,k}(x)\,T_{\sigma,(k,1^{\infty})}.$$
Suppose that $x$ and $y$ have the same projections, and let us fix a permutation $\sigma$. There is a minimal integer $k$ such that $\sigma \in \sym_{k}$, and $a_{\sigma,k}(x)$ is the coefficient of $T_{\sigma}$ in $\pr_{\infty,k}(x)$; consequently, $a_{\sigma,k}(x)=a_{\sigma,k}(y)$. Now, $a_{\sigma,k}(x)+a_{\sigma,k+1}(x)$ is the coefficient of $T_{\sigma}$ in $\pr_{\infty,k+1}(x)$, so one has also $a_{\sigma,k}(x)+a_{\sigma,k+1}(x)=a_{\sigma,k}(y)+a_{\sigma,k+1}(y)$, and $a_{\sigma,k+1}(x)=a_{\sigma,k+1}(y)$. By using the same argument and by induction on $l$, we conclude that $a_{\sigma,k+l}(x)=a_{\sigma,k+l}(y)$ for every $l$, and therefore $x=y$. We have then proved that the projections separate the vectors in $\dlg_{\infty,q}'$.
\end{proof}\bigskip

\section{Bases of the center of the Hecke algebra}
In the following, $\Zen_{n,q}$ is the center of $\IH_{n,q}$. We have already given a characterization of the \textbf{Geck-Rouquier central elements} $\Gamma_{\lambda}$, and they form a linear basis of $\Zen_{n,q}$ when $\lambda$ runs over $\Part_{n}$. Let us write down explicitly this basis when $n=3$:
$$\Gamma_{3}=T_{231}+T_{312}+(q-1)q^{-1}\,T_{321}\qquad;\qquad \Gamma_{2,1}=T_{213}+T_{132}+q^{-1}\,T_{321}\qquad;\qquad \Gamma_{1,1,1}=T_{123}$$
The first significative example of Geck-Rouquier element is actually when $n=4$. Thus, if one considers
\begin{align*}
\Gamma_{3,1}&=T_{1342}+T_{1423}+T_{2314}+T_{3124}+q^{-1}\,(T_{2431}+T_{4132}+T_{3214}+T_{4213})\\
&\quad+(q-1)q^{-1}\,(T_{1432}+T_{3214})+(q-1)q^{-2}\,(T_{3421}+T_{4312}+2\,T_{4231})+(q-1)^{2}q^{-3}\,T_{4321},
\end{align*}
the terms with coefficient $1$ are the four minimal $3$-cycles in $\sym_{4}$; the terms whose coefficients specialize to $1$ when $q=1$ are the eight $3$-cycles in $\sym_{4}$; and the other terms are not minimal in their conjugacy classes, and their coefficients vanish when $q=1$.\bigskip

It is really unclear how one can lift these elements to the Hecke algebras of composed permutations; fortunately, the center $\Zen_{n,q}$ admits other linear bases that are easier to pull back from $\IH_{n,q}$ to $\dlg_{n,q}$. In \cite{Las06}, seven different bases for $\Zen_{n,q}$ are studied\footnote{One can also consult \cite{Jon90} and \cite{Fra99}.}, and it is shown that up to diagonal matrices that depend on $q$ in a polynomial way, the transition matrices between these bases are the same as the transition matrices between the usual bases of the algebra of symmetric functions. We shall only need the \textbf{norm basis} $N_{\lambda}$, whose properties are recalled in Proposition \ref{madhatter}. If $c$ is a composition of $n$ and $\sym_{c}$ is the corresponding Young subgroup of $\sym_{n}$, it is well-known that each coset in $\sym_{n}/\sym_{c}$ or $\sym_{c}\backslash \sym_{n}$ has a unique representative $\omega$ of minimal length which is called the \textbf{distinguished representative} --- this fact is even true for parabolic double cosets. In what follows, we rather work with right cosets, and the distinguished representatives of $\sym_{c} \backslash \sym_{n}$ are precisely the permutation words whose recoils are contained in the set of descents of $c$. So for instance, if $c=(2,3)$, then
$$\sym_{(2,3)}\backslash \sym_{5}=\{12345,13245,13425,13452,31245,31425,31452,34125,34152,34512 \}=12 \sqcup\hspace{-1.32mm}\sqcup\, 345.$$

\begin{proposition}\label{madhatter} \cite[Theorem 7]{Las06}
If $c$ is a composition of $n$, let us denote by $N_{c}$ the element
$$\sum_{\omega \in \sym_{c}\backslash \sym_{n}} q^{-\ell(\omega)}\,T_{\omega^{-1}}\,T_{\omega}$$
in the Hecke algebra $\IH_{n,q}$. Then, $N_{c}$ does not depend on the order of the parts of $c$, and the $N_{\lambda}$ form a linear basis of $\Zen_{n,q}$ when $\lambda$ runs over $\Part_{n}$ --- in particular, the norms $N_{c}$ are central elements. Moreover,
$$(\Gamma_{\lambda})_{\lambda \in \Part_{n}}=D\cdot M2E \cdot(N_{\mu})_{\mu \in \Part_{n}},$$
where $M2E$ is the transition matrice between monomial functions $m_{\lambda}$ and elementary functions $e_{\mu}$, and $D$ is the diagonal matrix with coefficients $\left(q/(q-1)\right)^{n-\ell(\lambda)}$.
\end{proposition}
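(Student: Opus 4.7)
The statement is due to Lascoux \cite[Theorem~7]{Las06}; here is a plan for my own proof. The key claim is the centrality of $N_{c}$; once this is established, the rest follows formally.

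My preferred approach proceeds via the canonical symmetrising trace $\tau$ of $\IH_{n,q}$, characterised by $\tau(T_{\omega})=\delta_{\omega,e}$. With respect to the associated bilinear form $\langle x,y\rangle=\tau(xy)$, the basis $(T_{\omega})_{\omega\in\sym_{n}}$ has dual basis $(q^{-\ell(\omega)}T_{\omega^{-1}})_{\omega\in\sym_{n}}$. Viewing $\IH_{n,q}$ as a free right module over the parabolic subalgebra $\IH_{c,q}$ with basis $(T_{\omega})$ indexed by the distinguished representatives $\omega$ of $\sym_{c}\backslash\sym_{n}$, one checks that this basis is again self-dual with the same weights $q^{-\ell(\omega)}$ under the analogous relative bilinear pairing, so that $\IH_{c,q}\subset\IH_{n,q}$ is a symmetric Frobenius extension. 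The element $N_{c}=\sum_{\omega} q^{-\ell(\omega)}T_{\omega^{-1}}T_{\omega}$ is then the Casimir of this Frobenius extension, and therefore central in $\IH_{n,q}$ by the usual Frobenius-algebra yoga. A more pedestrian alternative is to verify $T_{s_{i}}N_{c}=N_{c}T_{s_{i}}$ directly, by partitioning the distinguished representatives $\omega$ according to the behaviour of $\omega s_{i}$ (either $\omega s_{i}$ remains distinguished, or it falls back into $\sym_{c}\cdot\omega$) and applying the quadratic relation to obtain pairwise cancellations.

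With centrality in hand, the rest is routine. Two compositions $c,c'$ with the same underlying partition give conjugate Young subgroups $\sym_{c}=\pi\sym_{c'}\pi^{-1}$; conjugation by $T_{\pi}$ transports $N_{c'}$ to $N_{c}$, and centrality allows us to discard the conjugating element, yielding $N_{c}=N_{c'}$. The basis property reduces to invertibility of the announced transition matrix with $\Gamma_{\lambda}$: the matrix $M2E$ is invertible over $\Q$ (classical), and the diagonal matrix $D$ is invertible over $\C(q)$. To derive the transition formula itself, I would identify $N_{c}$ and $\Gamma_{\lambda}$ respectively with the $q$-analogs of characters of induced trivial representations and of ``elementary-type'' central elements, then apply the classical symmetric-function identity $(m_{\lambda})=M2E\cdot(e_{\mu})$; the scaling factor $(q/(q-1))^{n-\ell(\lambda)}$ arises naturally when comparing the $q$-weighting of minimal-length representatives in a conjugacy class against the elementary symmetric normalisation. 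The main obstacle, in my view, is making the relative Casimir argument fully rigorous---the Frobenius extension is classical, but one must carefully identify the relative trace map onto $\IH_{c,q}$; the symmetric-function bookkeeping at the end is standard.
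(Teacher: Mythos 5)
The paper offers no proof of this proposition at all: it is quoted directly from Lascoux \cite[Theorem 7]{Las06} (with \cite{Jon90} and \cite{Fra99} as companion references), so there is no internal argument to measure yours against, and your proposal must be judged on its own. The centrality part of your sketch is sound and is essentially the classical ``relative norm'' argument of Hoefsmit--Scott and Jones: the parabolic subalgebra $\IH_{c,q}\subset\IH_{n,q}$ is a Frobenius extension with dual bases $(T_{\omega})$ and $(q^{-\ell(\omega)}T_{\omega^{-1}})$ indexed by the distinguished representatives of $\sym_{c}\backslash\sym_{n}$ (this uses $\ell(v\omega)=\ell(v)+\ell(\omega)$ for $v\in\sym_{c}$ and $\omega$ distinguished), and the relative Casimir $\sum_{\omega}q^{-\ell(\omega)}T_{\omega^{-1}}T_{\omega}$ is central by the usual computation. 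One caveat on the rearrangement invariance: ``conjugation by $T_{\pi}$'' only transports $N_{c'}$ to $N_{c}$ if $\pi$ is chosen as the distinguished representative of the double coset $\sym_{c}\pi\sym_{c'}$ realizing the conjugacy of Young subgroups, since only then does $T_{\pi}T_{v}T_{\pi}^{-1}=T_{\pi v\pi^{-1}}$ hold for all $v\in\sym_{c'}$; for an arbitrary conjugating permutation this identity fails in $\IH_{n,q}$ and the argument breaks down.

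The genuine gap is the ``Moreover'' clause, $(\Gamma_{\lambda})=D\cdot M2E\cdot(N_{\mu})$, which is precisely the part of the statement this paper exploits downstream (Section 5 shuttles between the $N$- and $\Gamma$-bases via Proposition \ref{psychosocial}). Your plan --- identify $N_{c}$ with $q$-analogs of induced trivial characters, identify $\Gamma_{\lambda}$ with ``elementary-type'' central elements, and invoke $(m_{\lambda})=M2E\cdot(e_{\mu})$ --- is a heuristic, not a proof: one must actually verify that $D\cdot M2E\cdot(N_{\mu})$ satisfies the two characterizing properties of the Geck--Rouquier elements (specialization to $C_{\lambda}$ at $q=1$, and absence of non-minimal-length terms beyond $C_{\lambda}$), and establishing that the $N_{\lambda}$ sit in the ``elementary'' slot of one and the same symmetric-function dictionary, with exactly the diagonal correction $(q/(q-1))^{n-\ell(\lambda)}$, is where the substance of Lascoux's theorem lies. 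Note also that your basis claim is logically downstream of this formula (you invert $D\cdot M2E$ against the known basis $(\Gamma_{\lambda})$ of $\Zen_{n,q}$), so as written your proposal fully establishes centrality but leaves both the basis statement and the transition formula resting on an unproved step; if you want the basis property independently, you should prove it directly as Jones does, by linear independence of the $N_{\lambda}$ plus the dimension count $\dim\Zen_{n,q}=|\Part_{n}|$.
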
\bigskip

So for instance, $\Gamma_{3}=q^{2}\,(q-1)^{-2}\,(3\,N_{3}-3\,N_{2,1}+N_{1,1,1})$, because $m_{3}=3\,e_{3}-3\,e_{2,1}+e_{1,1,1}$. Let us write down explicitly the norm basis when $n = 3$:
\begin{align*}&N_{3}=T_{123}\qquad;\qquad N_{2,1}=3\,T_{123}+(q-1)q^{-1}\,(T_{213}+T_{132})+(q-1)q^{-2}\,T_{321}\\
&N_{1,1,1}=6\,T_{123}+3(q-1)q^{-1}\,(T_{213}+T_{132})+(q-1)^{2}q^{-2}\,(T_{231}+T_{312})+(q^{3}-1)q^{-3}\,T_{321}
\end{align*}
We shall see hereafter that these norms have natural preimages by the projections $\pr_{n}$ and $\pr_{\infty,n}$.\bigskip

\section{Generic norms and the Hecke-Ivanov-Kerov algebra}
Let us fix some notations. If $c$ is a composition of size $|c|$ less than $n$, then $c \uparrow n$ is the composition $(c_{1},\ldots,c_{r},n-|c|)$, $J_{c}=I_{1}I_{2}\cdots I_{|c|-1}$, and 
$$M_{c,n}=\sum_{\omega \in \sym_{c\uparrow n}\backslash \sym_{n}} q^{-\ell(\omega)}\,T_{\omega^{-1}}\,T_{\omega}\,J_{c},$$
the products $T_{\omega}$ being considered as elements of $\dlg_{n,q}$. So, $M_{c,n}$ is an element of $\dlg_{n,q}$, and we set $M_{c,n}=0$ if $|c|>n$.

\begin{proposition}\label{humanbomb}
For any $N,n$ and any composition $c$, $\phi_{N,n}(M_{c,N})=M_{c,n}$, and $\pr_{n}(M_{c,n})=N_{c \uparrow n}$ if $|c|\leq n$, and $0$ otherwise. On the other hand, $M_{c,n}$ is always in $\dlg_{n,q}'$.
\end{proposition}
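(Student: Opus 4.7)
My plan is to verify the three claims separately, with the third (membership in $\dlg_{n,q}'$) requiring the most care. The identity $\pr_{n}(M_{c,n})=N_{c\uparrow n}$ is essentially a bookkeeping check: since $\pr_{n}$ sends every $I_{i}$ to $1$, we have $\pr_{n}(J_{c})=1$ whenever $|c|\leq n$, and the defining formula for $M_{c,n}$ then matches term-by-term the formula for $N_{c\uparrow n}$ recalled in Proposition \ref{madhatter}; when $|c|>n$ both sides are zero by convention.

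For the compatibility $\phi_{N,n}(M_{c,N})=M_{c,n}$, I would first observe that if $\omega\in \sym_{N}$ does not already lie in the subgroup $\sym_{n}\subset \sym_{N}$, then some integer $>n$ is moved by $\omega$ and every reduced expression of $\omega$ must contain a generator $S_{i\geq n}$; applying $\phi_{N,n}$ to such a product gives $0$. Hence $\phi_{N,n}(T_{\omega})=T_{\omega}$ if $\omega\in \sym_{n}$ and $0$ otherwise, and similarly $\phi_{N,n}(J_{c})=J_{c}$ as soon as $|c|\leq n$. The surviving terms of $\phi_{N,n}(M_{c,N})$ are therefore indexed by the distinguished representatives of $\sym_{c\uparrow N}\backslash \sym_{N}$ lying in $\sym_{n}$; inspecting the characterization via recoils, these are precisely the distinguished representatives of $\sym_{c\uparrow n}\backslash \sym_{n}$ (both sets being cut out by the condition that all recoils lie in $\{c_{1},c_{1}+c_{2},\ldots,|c|\}$, a condition that does not reference $N$ or $n$).

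The substantive part is the claim that $M_{c,n}\in \dlg_{n,q}'$. My approach is to route the argument through the isomorphism $\dlg_{n,q}\simeq \bigoplus_{d\in \Comp_{n}}\IH_{d,q}$ of Theorem \ref{amnesia}. Denoting by $c_{\omega}$ the finest composition such that $\omega\in \sym_{c_{\omega}}$, the image $\psi(T_{\omega})$ equals $\sum_{d\geq c_{\omega}}(T_{\omega}\in\IH_{d,q})$, and the analogous formulas for $\psi(T_{\omega^{-1}})$ and $\psi(J_{c})$, combined with componentwise multiplication in the direct sum, show that $\psi(T_{\omega^{-1}}T_{\omega}J_{c})$ is supported on the components $d\geq e$, where $e=c_{\omega}\vee (|c|,1^{n-|c|})$. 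Pulling back by $\psi^{-1}$, the expansion of $T_{\omega^{-1}}T_{\omega}J_{c}$ in the basis $(T_{\tau,b})$ of $\dlg_{n,q}$ uses only the single composition $b=e$.

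It therefore suffices to prove the combinatorial lemma that for every distinguished representative $\omega$ of $\sym_{c\uparrow n}\backslash \sym_{n}$, the composition $e=c_{\omega}\vee (|c|,1^{n-|c|})$ has the form $(k,1^{n-k})$; this is the step I view as the main obstacle. I would exploit the shuffle description of distinguished coset representatives: such an $\omega$ is a concatenation of increasing sequences of lengths $c_{1},\ldots,c_{r},n-|c|$, hence is increasing on the positions $|c|+1,\ldots,n$. The descents of $e$ in $\lle 1,n-1\rre$ are the common descents of $c_{\omega}$ and $(|c|,1^{n-|c|})$, i.e.\ the positions $j\in \{|c|,\ldots,n-1\}$ with $\omega(\lle 1,j\rre)=\lle 1,j\rre$. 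The last-block-increasing property then forces that whenever such a $j$ exists, $\omega$ actually fixes every integer $>j$ pointwise. Consequently the set of such $j$ is an interval $\{k,k+1,\ldots,n-1\}$ with $k=\max(|c|,\max\{i:\omega(i)\neq i\})$, and $e$ is indeed of the shape $(k,1^{n-k})$.
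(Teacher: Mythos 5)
Your overall architecture is sound and, for the key third claim, genuinely different from the paper's: instead of exhibiting explicit reduced expressions for the distinguished representatives $\omega$ and arguing that multiplication by $T_{\omega^{-1}}$ cannot coarsen the composition any further, you push the whole product $T_{\omega^{-1}}T_{\omega}J_{c}$ through the isomorphism $\psi:\dlg_{n,q}\to\bigoplus_{d}\IH_{d,q}$ of Theorem \ref{amnesia}, which cleanly reduces the membership $M_{c,n}\in\dlg_{n,q}'$ to the purely combinatorial statement that $c_{\omega}\vee(|c|,1^{n-|c|})$ is of hook shape $(k,1^{n-k})$. That is an attractive reorganization. The first two claims are handled correctly (the paper proves the compatibility one step at a time via $\phi_{n,n-1}$; you do it in one shot, and both arguments hinge on the same recoil characterization of the coset representatives).

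There is, however, a concrete error in the justification of your combinatorial lemma. A distinguished representative of a \emph{right} coset in $\sym_{c\uparrow n}\backslash\sym_{n}$ is \emph{not} a concatenation of increasing runs of lengths $c_{1},\ldots,c_{r},n-|c|$, and in particular it need not be increasing on the positions $|c|+1,\ldots,n$: that is the description of distinguished representatives of \emph{left} cosets (descents of $\omega$ contained in the descents of $c\uparrow n$). The correct description, which the paper uses, is that the word of $\omega$ is a shuffle of the increasing \emph{value}-blocks $\lle 1,c_{1}\rre,\ldots,\lle |c|+1,n\rre$, i.e.\ the values $|c|+1,\ldots,n$ occur at increasing positions (equivalently, the recoils of $\omega$ lie in the descent set of $c\uparrow n$). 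The paper's own example $\omega=5613724$ with $c=(2,2)$ and $n=7$ refutes your claim: $\omega(5)\omega(6)\omega(7)=724$ is not increasing. Fortunately the consequence you actually need survives under the correct description: if $j\geq|c|$ and $\omega(\lle 1,j\rre)=\lle 1,j\rre$, then $\omega$ permutes $\lle j+1,n\rre$, and since the values $j+1,\ldots,n$ all lie in the last value-block and hence occupy increasing positions while filling exactly the positions $j+1,\ldots,n$, they are fixed pointwise. With that one-line repair your argument goes through; as written, though, the key step rests on a false statement.
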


\begin{proof}
Because of the description of distinguished representatives of right cosets by positions of recoils, if $|c|\leq n$, then the sum $M_{c,n}$ is over permutation words $\omega$ with recoils in the set of descents of $c$ (notice that we include $|c|$ in the set of descents of $c$). Let us denote by $R_{c,n}$ this set of words, and suppose that $|c|\leq n-1$. If $\omega \in R_{c,n}$ is such that $\omega(n) \neq n$, then $T_{\omega}$ involves $S_{n-1}$, so the image  by $\phi_{n,n-1}$ of the corresponding term in $M_{c,n}$ is zero. On the other hand, if $\omega(n)=n$, then any reduced decomposition of $T_{\omega}$ does not involve $S_{n-1}$, so the corresponding term in $M_{c,n}$ is preserved by $\phi_{n,n-1}$. Consequently, $\phi_{n,n-1}(M_{c,n})$ is a sum with the same terms as $M_{c,n}$, but with $\omega$ running over $R_{c,n-1}$; so, we have proved that $\phi_{n,n-1}(M_{c,n})=M_{c,n-1}$ when $|c|\leq n-1$. The other cases are much easier: thus, if $|c|=n$, then $M_{c,n-1}=0$, and $\phi_{n,n-1}(M_{c,n})$ is also zero because $\phi_{n,n-1}(J_{c})=0$. And if $|c|>n$, then $M_{c,n}$ and $M_{c,n-1}$ are both equal to zero, and again $\phi_{n,n-1}(M_{c,n})=M_{c,n-1}$. Since $$\phi_{N,n}=\phi_{n+1,n}\circ \phi_{n+2,n+1}\circ\cdots\circ\phi_{N,N-1},$$ we have proved the first part of the proposition, and the second part is really obvious. \bigskip

\noindent Now, let us show that $M_{c,n}$ is in $\dlg_{n,q}'$. Notice that the result is trivial if $|c|>n$, and also if $|c|=n$, because we have then $J_{c}=I_{(n)}$, and therefore $d=(n)$ for any composed permutation $(\sigma,d)$ involved in $M_{c,|c|}$. Suppose then that $|c|\leq n-1$. Because of the description of $\sym_{d}\backslash \sym_{|d|}$ as a shuffle product, any distinguished representative $\omega$ of $\sym_{c \uparrow n} \backslash \sym_{n}$ is the shuffle of a distinguished representative $\omega_{c}$ of $\sym_{c}\backslash \sym_{|c|}$ with the word $|c|+1,|c|+2,\ldots,n$. For instance, $5613724$ is the distinguished representative of a right $\sym_{(2,2,3)}$-coset, and it is a shuffle of $567$ with the distinguished representative $1324$ of a right $\sym_{(2,2)}$-coset. Let us denote by $s_{i_{1}}\cdots s_{i_{r}}$ a reduced expression of $\omega_{c}$, and by $j_{|c|+1},\ldots,j_{n}$ the positions of $|c|+1,\ldots,n$ in $\omega$. Then, it is not difficult to see that
$$s_{i_{1}}\cdots s_{i_{r}}\,\times\,(s_{|c|} s_{|c|-1} \cdots s_{j_{|c|+1}})\,(s_{|c|+1}s_{|c|}\cdots s_{j_{|c|+2}})\,\cdots\, (s_{n-1}s_{n-2}\cdots s_{j_{n}})$$
is a reduced expression for $\omega$; for instance, $s_{2}$ is the reduced expression of $1324$, and
$$s_{2}\,\times\,(s_{4}s_{3}s_{2}s_{1})\,(s_{5}s_{4}s_{3}s_{2})\,(s_{6}s_{5})$$
is a reduced expression of $5613724$. From this, we deduce that $T_{\omega}\,J_{c}=T_{\omega,(k,1^{n-k})}$, where $k$ is the highest integer in $\lle |c|+1, n \rre$ such that $j_{k}<k$ --- we take $k=|c|$ if $\omega=\omega_{c}$. Then, the multiplication by $T_{\omega^{-1}}$ cannot fatten the composition anymore, so $T_{\omega^{-1}}T_{\omega}\,J_{c}$ is a linear combination of $T_{\tau,(k,1^{n-k})}$, and we have proved that $M_{n,c}$ is indeed in $\dlg_{n,q}'$.
\end{proof}\bigskip

From the previous proof, it is now clear that if we consider the infinite sum
$M_{c}=\sum \,q^{-\ell(\omega)}\,T_{\omega^{-1}}\,T_{\omega}\,J_{c}$
over permutation words $\omega \in \sym_{\infty}$ with their recoils in the set of descents of $c$, then $M_{c}$ is the unique element of $\dlg_{\infty,q}$ such that $\phi_{\infty,n}(M_{c})=M_{c,n}$ for any positive integer $n$, and also the unique element of $\dlg_{\infty,q}'$ such that $\pr_{\infty,n}(M_{c})=N_{c \uparrow n}$ for any positive integer $n$ (with by convention $N_{c \uparrow n}=0$ if $|c|>n$). In particular, $M_{c}$ does not depend on the order of the parts of $c$, because this is true for the $N_{c\uparrow n}$ and the projections separate the vectors in $\dlg_{\infty,q}'$. Consequently, we shall consider only elements $M_{\lambda}$ labelled by partitions $\lambda$ of arbitrary size, and call them \textbf{generic norms}. For instance:
$$M_{(2),3}=	
T_{12|3} + 2\,T_{123} + (1-q^{-1})\,(T_{132} + T_{213})
+ (q^{-1}-q^{-2})\,T_{321}$$
In what follows, if $i<n$, we denote by $(S_{i})^{-1}$ the element of $\dlg_{n,q}$ equal to:
$$(S_{i})^{-1}=q^{-1}\,S_{i}+\left(q^{-1}-1\right)I_{i}$$
The product $S_{i}\,(S_{i})^{-1}=(S_{i})^{-1}S_{i}$ equals $I_{i}$ in $\dlg_{n,q}$, and by the specialization $\pr_{n}:\dlg_{n,q} \to \IH_{n,q}$, one recovers $S_{i}\,(S_{i})^{-1}=1$ in the Hecke algebra $\IH_{n,q}$.

\begin{theorem}\label{asylum}
The $M_{\lambda}$ span linearly the subalgebra $\clg_{\infty,q}\subset \dlg_{\infty,q}'$ that consists in elements $x \in \dlg_{\infty,q}'$ such that $I_{i}\,x=S_{i}\,x\,(S_{i})^{-1}$ for every $i$. In particular, any product $M_{\lambda}\,*\,M_{\mu}$ is a linear combination of $M_{\nu}$, and moreover, the terms $M_{\nu}$ involved in the product satisfy the inequality $|\nu| \leq |\lambda|+|\mu|$.
\end{theorem}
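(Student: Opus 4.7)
The approach is to translate the defining equation of $\clg_{\infty,q}$ through the isomorphism $\psi$ of Theorem \ref{amnesia} into a componentwise centrality condition, and then to identify the generic norms $M_\lambda$ as the natural basis. Under $\psi:\dlg_{n,q}\to\bigoplus_{c\in\Comp_n}\IH_{c,q}$, on a component $c$ with $i\in\mathrm{code}(c)$, $\psi_c(I_i)=1$ and $\psi_c(S_i)=S_i\in\IH_{c,q}$, and the formula $(S_i)^{-1}=q^{-1}S_i+(q^{-1}-1)I_i$ becomes the Hecke inverse $S_i^{-1}$ there; on components with $i\notin\mathrm{code}(c)$ all three generators vanish. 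The condition $I_ix=S_ix(S_i)^{-1}$ (for every $i$) therefore amounts to $\psi_c(x)$ commuting with every $S_i$ in $\IH_{c,q}$ with $i\in\mathrm{code}(c)$, that is, $\psi_c(x)\in Z(\IH_{c,q})$ for every $c$. This immediately gives that $\clg_{\infty,q}$ is a subalgebra of $\dlg_{\infty,q}'$, since centers are closed under multiplication and the product in $\bigoplus_c\IH_{c,q}$ is componentwise.

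Next I check that $M_\lambda$ satisfies this. I compute $\psi_{c'}(M_{\lambda,n})$ directly: a term $q^{-\ell(\omega)}T_{\omega^{-1}}T_\omega J_\lambda$ contributes to the $c'$-component only when $\omega\in\sym_{c'}$ and $c'_1\ge|\lambda|$. Using the shuffle decomposition of distinguished representatives recalled in the proof of Proposition \ref{humanbomb}, the surviving sum reorganizes along the blocks of $\pi(c')$ into a product of norm elements, one in each Young factor $\IH_{c'_j,q}$; each such norm lies in the corresponding center by Proposition \ref{madhatter}, so the full component lies in $Z(\IH_{c',q})$. I expect this computation, keeping track of how distinguished coset representatives of $\sym_{\lambda\uparrow n}\backslash\sym_n$ intersect $\sym_{c'}$ and how their lengths split across blocks, to be the main obstacle of the proof.

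For the spanning statement, I count dimensions. An element of $\clg_{n,q}\cap\dlg_{n,q}'$ corresponds under $\psi$ to a tuple supported on compositions $(k,1^{n-k})$ with the first component central in $\IH_{k,q}$; this space has dimension $\sum_{k=0}^n|\Part_k|$, matching the number of partitions $\lambda$ with $|\lambda|\le n$. The $M_{\lambda,n}$ are linearly independent because $\psi_{(|\lambda|,1^{n-|\lambda|})}(M_{\lambda,n})$ is essentially the norm $N_\lambda\in Z(\IH_{|\lambda|,q})$, and the $N_\lambda$ with $\lambda\vdash k$ form a basis of $Z(\IH_{k,q})$; an induction on $k=|\lambda|$ handling the triangularity in $k$ then yields the spanning. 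Since $\clg_{\infty,q}$ is a subalgebra, every product $M_\lambda\,*\,M_\mu$ is a linear combination of $M_\nu$.

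Finally, the size bound $|\nu|\le|\lambda|+|\mu|$ can be obtained either from the filtration $\deg T_{\sigma,c}=|\mathrm{code}(c)|$ of $\dlg_{\infty,q}$ (noting $\mathrm{code}(c\vee d)=\mathrm{code}(c)\cup\mathrm{code}(d)$, so this degree is subadditive, and that the lowest-degree part of $M_\lambda$ is $J_\lambda$, of degree $|\lambda|-1$), or more directly by projecting to $\IH_{n,q}$, writing $N_{\lambda\uparrow n}N_{\mu\uparrow n}=\sum_\nu c_\nu N_{\nu\uparrow n}$, and transferring the known bound on products of completed Geck-Rouquier classes (the first half of Theorem \ref{holdupmental}, from \cite{FW09}) through the unitriangular change of basis of Proposition \ref{madhatter}, with Proposition \ref{cheshirecat} used to lift the equality back to $\dlg_{\infty,q}'$.
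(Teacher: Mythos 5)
Your reformulation of the condition $I_{i}\,x=S_{i}\,x\,(S_{i})^{-1}$ through $\psi$ as ``$\psi_{c}(x)\in Z(\IH_{c,q})$ for every $c$'' is correct and is a clean alternative to the paper's argument, which instead works in auxiliary subspaces $\dlg_{\infty,q,i}'$ where the projections separate vectors, and deduces $I_{i}\,M_{\lambda}=S_{i}\,M_{\lambda}\,(S_{i})^{-1}$ from the centrality of $\pr_{\infty,n}(M_{\lambda})=N_{\lambda\uparrow n}$ with no computation at all. What you call the ``main obstacle'' is in fact already available: for $x\in\dlg_{n,q}'$ one has $\psi_{c}(x)=\psi_{(c_{1},1^{n-c_{1}})}(x)\otimes 1\otimes\cdots\otimes 1$, and $\psi_{(k,1^{n-k})}=\pr_{k}\circ\phi_{n,k}$, so $\psi_{c}(M_{\lambda,n})=N_{\lambda\uparrow c_{1}}\otimes 1\otimes\cdots$, which is central by Propositions \ref{humanbomb} and \ref{madhatter}; no new shuffle analysis is required. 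Your description of $\psi(\clg_{n,q}\cap\dlg_{n,q}')$ as ``tuples supported on compositions $(k,1^{n-k})$'' is inaccurate --- $T_{\sigma,(k,1^{n-k})}$ has a nonzero component at every $c$ with $c_{1}\geq k$ --- but the dimension count $\sum_{k\leq n}|\Part_{k}|$ survives because such a tuple is determined by its components at the $(k,1^{n-k})$, and your triangular induction for spanning is essentially the paper's.

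The genuine weak point is the bound $|\nu|\leq|\lambda|+|\mu|$. Your first route, via the filtration $\deg T_{\sigma,c}=|\mathrm{code}(c)|$, is precisely what the authors state in a footnote they could not make work, and it does fail: $M_{\lambda}$ is an infinite sum containing terms $T_{\tau,(k,1^{\infty})}$ for every $k\geq|\lambda|$ (for instance $M_{(2)}$ contains $2\,T_{\mathrm{id},(3,1^{\infty})}=2\,J_{(3)}$), so it has unbounded degree, and knowing that its lowest-degree part is $J_{\lambda}$ does not control the coefficients $g_{\lambda\mu}^{\nu}$ for large $|\nu|$, since the $M_{\nu}$ with small $|\nu|$ also contribute to high-degree terms. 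Your second route is the one the paper actually takes, but it needs more than the fixed-$n$ change of basis of Proposition \ref{madhatter}: the Francis--Wang bound concerns the completions $\Gamma_{\rho\to n}$ (add $1$ to each part and pad with $1$'s), whereas the norms are completed as $N_{\lambda\uparrow n}$ (append a part $n-|\lambda|$), and transferring the bound between these two families uniformly in $n$ is exactly the content of Proposition \ref{psychosocial} --- the transitions between $m_{\rho\to n}$ and $e_{\lambda\uparrow n}$ are polynomial in $n$ and supported on partitions of bounded size. Without citing that ingredient the transfer argument is incomplete.
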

\begin{proof} If $I_{i}\,x=S_{i}\,x\,(S_{i})^{-1}$ and $I_{i}\,y=S_{i}\,y\,(S_{i})^{-1}$, then 
$$I_{i}\,xy=I_{i}\,x\,I_{i}\,y=S_{i}\,x\,(S_{i})^{-1}S_{i}\,y\,(S_{i})^{-1}=S_{i}\,x\,I_{i}\,y\,(S_{i})^{-1}=S_{i}\,xy\,(S_{i})^{-1},$$ so the elements that ``commute'' with $S_{i}$ in $\dlg_{\infty,q}$ form a subalgebra. As an intersection, $\clg_{\infty,q}$ is also a subalgebra of $\dlg_{\infty,q}$; let us see why it is spanned by the generic norms. If $\dlg_{\infty,q,i}'$ is the subspace of $\dlg_{\infty,q}$ spanned by the $T_{\sigma,c}$ with $c=(k,1^{\infty})\vee(1^{i-1},2,1^{\infty})$, then the projections separate the vectors in this subspace --- this is the same proof as in Proposition \ref{cheshirecat}. For $\lambda \in \Part$, $I_{i}\,M_{\lambda}$ and $S_{i}\,M_{\lambda}\,(S_{i})^{-1}$ belong to $\dlg_{\infty,q,i}'$, and they have the same projections in $\IH_{n,q}$, because $\pr_{\infty,n}(M_{\lambda})$ is a norm and in particular a central element. Consequently, $I_{i}\,M_{\lambda}=S_{i}\,M_{\lambda}\,(S_{i})^{-1}$, and the $M_{\lambda}$ are indeed in $\clg_{\infty,q}$. Now, if we consider an element $x\in \clg_{\infty,q}$, then for $i<n$, $\pr_{n}(x)=S_{i}\,\pr_{n}(x)\,(S_{i})^{-1}$, so $\pr_{n}(x)$ is in $\Zen_{n,q}$ and is a linear combination of norms:
$$\forall n \in \N,\,\,\,\pr_{n}(x)=\sum_{\lambda \in \Part_{n}} a_{\lambda}(x)\,N_{\lambda}$$
Since the same holds for any difference $x-\sum b_{\lambda} \,M_{\lambda}$, we can construct by induction on $n$ an infinite linear combination $S_{\infty}$ of $M_{\lambda}$ that has the same projections as $x$:
\begin{align*} &\pr_{1}(x)=\sum_{|\lambda|=1} b_{\lambda}\,N_{\lambda} \quad\Rightarrow\quad \pr_{1}\bigg(x-\sum_{|\lambda|=1} b_{\lambda}\,M_{\lambda}\bigg)=0,\,\,\,S_{1}=\sum_{|\lambda|=1} b_{\lambda}\,M_{\lambda}\\
&\pr_{2}\left(x-S_{1}\right)=\sum_{|\lambda|=2} b_{\lambda}\,N_{\lambda} \quad\Rightarrow\quad \pr_{1,2}\bigg(x-\sum_{|\lambda|\leq 2} b_{\lambda}\,M_{\lambda}\bigg)=0,\,\,\,S_{2}=\sum_{|\lambda|\leq 2} b_{\lambda}\,M_{\lambda}\\
&\quad\vdots\\
&\pr_{n+1}\left(x-S_{n}\right)=\sum_{|\lambda|=n+1}b_{\lambda}\,N_{\lambda} \quad\Rightarrow\quad S_{n+1}=S_{n}+\sum_{|\lambda|=n+1}b_{\lambda}\,M_{\lambda}=\sum_{|\lambda|\leq n+1}b_{\lambda}\,M_{\lambda}
\end{align*}
Then, $S_{\infty}=\sum_{\lambda \in \Part} b_{\lambda}\,M_{\lambda}$ is in $\dlg_{\infty,q}'$ and has the same projections as $x$, so $S_{\infty}=x$. In particular, since $\clg_{\infty,q}$ is a subalgebra, a product $M_{\lambda}\,*\,M_{\mu}$ is in $\clg_{\infty,q}$ and is an \emph{a priori} infinite linear combination of $M_{\nu}$:
$$\forall \lambda,\mu,\,\,\,M_{\lambda}\,*\,M_{\mu}=\sum \,g_{\lambda\mu}^{\nu} \,M_{\nu}$$
Since the norms $N_{\lambda}$ are defined over $\Z[q,q^{-1}]$, by projection on the Hecke algebras $\IH_{n,q}$, one sees that the $g_{\lambda\mu}^{\nu}$ are also in $\Z[q,q^{-1}]$ --- in fact, they are \emph{symmetric} polynomials in $q$ and $q^{-1}$. It remains to be shown that the previous sum is in fact over partitions $|\nu|$ with $|\nu|\leq |\lambda|+|\mu|$; we shall see why this is true in the last paragraph\footnote{Unfortunately, we did not succeed in proving this result with adequate filtrations on $\dlg_{\infty,q}$ or $\dlg_{\infty,q}'$.}.
\end{proof}\bigskip

For example, $M_{1}\,*\,M_{1}=M_{1}+(q+1+q^{-1})\,M_{1,1}-(q+2+q^{-1})\,M_{2}$, and from this generic identity one deduces the expression of any product $(N_{(1)\uparrow n})^{2}$, \emph{e.g.},
$$N_{1,1}^{\,2}=(q+2+q^{-1})\,(N_{1,1}-N_{2}) \qquad;\qquad N_{3,1}^{\,2}=N_{3,1}+(q+1+q^{-1})\,N_{2,1,1}-(q+2+q^{-1})\,N_{2,2}.$$
Let us denote by $\alg_{\infty,q}$ the subspace of $\clg_{\infty,q}$ whose elements are \emph{finite} linear combinations of generic norms; this is in fact a subalgebra, which we call the \textbf{Hecke-Ivanov-Kerov algebra} since it plays the same role for Iwahori-Hecke algebras as $\alg_{\infty}$ for symmetric group algebras.\bigskip

\section{Completion of partitions and symmetric functions}
The proof of Theorem \ref{holdupmental} and of the last part of Theorem \ref{asylum} relies now on a rather elementary property of the transition matrices $M2E$ and $E2M$. By convention, we set $e_{\lambda \uparrow n}=0$ if $|\lambda|>n$, and $m_{\lambda \rightarrow n}=0$ if $|\lambda|+\ell(\lambda)>n$. Then:
\begin{proposition}\label{psychosocial}
There exists polynomials $P_{\lambda\mu}(n)\in\Q[n]$ and $Q_{\lambda\mu}(n) \in \Q[n]$ such that
$$\forall \lambda,n,\qquad m_{\lambda \rightarrow n} = \sum_{\mu' \leq_{d}\, \lambda} P_{\lambda\mu}(n) \,\,e_{\mu \uparrow n}\quad\mathrm{and}\quad 
                                          e_{\lambda \uparrow n} = \sum_{\mu \leq_{d}\, \lambda'} Q_{\lambda\mu}(n) \,\,m_{\mu \rightarrow n},$$
where $\mu \leq_{d}\, \lambda$ is the domination relation on partitions.
\end{proposition}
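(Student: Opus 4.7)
The plan is to combine the standard dominance triangularity of the transition between the $m$- and $e$-bases of $\Lambda$ with a direct count of $0$-$1$ matrices. Recall that in the expansion $e_\alpha=\sum_\beta M_{\beta\alpha}\,m_\beta$, the coefficient $M_{\beta\alpha}$ is the number of $0$-$1$ matrices with row sums $\beta$ and column sums $\alpha$; by Gale-Ryser one has $M_{\beta\alpha}\neq 0 \Rightarrow \beta\leq_d\alpha'$ with $M_{\alpha',\alpha}=1$, so both $M$ and $M^{-1}$ are unitriangular in the dominance order. I would first verify the dual identities $(\lambda\to n)'=\lambda'\uparrow n$ and $(\mu\uparrow n)'=\mu'\to n$ by reading off conjugate Young diagrams; combined with a partial-sum comparison in which the leading part $n-|\lambda|$ cancels on both sides, these yield the stable equivalence $\lambda\to n\leq_d\mu'\to n \iff \lambda\leq_d\mu'$. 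A short majorization argument based on the truncated-sum characterisation of dominance, $\alpha\leq_d\beta \iff \sum_i(\alpha_i-t)_+\leq\sum_i(\beta_i-t)_+$ for all $t\geq 0$, then shows that the partitions $\nu\vdash n$ satisfying $\nu\geq_d\lambda'\uparrow n$ are exactly those of the form $\mu\uparrow n$ with $\mu'\leq_d\lambda$, pinning down the support of both expansions.

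For the polynomial dependence in the forward direction $e\to m$, I would count $M_{\mu\to n,\,\lambda\uparrow n}$ directly. A $0$-$1$ matrix of this shape splits into the ``small'' $\ell(\mu)\times(\ell(\lambda)+1)$ block indexed by the nontrivial parts and a padding region of $n-|\mu|-\ell(\mu)$ rows of sum $1$ distributed among the $\ell(\lambda)+1$ columns, one of which is the big column of sum $n-|\lambda|$. After reparametrising by the overlap $c$ of padding rows with the big column, the multinomial factor becomes $\binom{n-|\mu|-\ell(\mu)}{|\lambda|-|\mu|-c}\binom{|\lambda|-|\mu|-c}{b_1,\dots,b_{\ell(\lambda)}}$, in which only the first binomial depends on $n$, polynomially and of bounded degree. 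Summing finitely many such contributions weighted by the (small-block, $n$-independent) count of $0$-$1$ matrices gives $Q_{\lambda\mu}(n)\in\Z[n]$.

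The identity for $m_{\lambda\to n}$ then follows by inverting the restricted matrix $M^R(n)=(M_{\mu\to n,\,\lambda\uparrow n})_{\mu,\lambda}$. Because $M^R(n)$ is unitriangular in dominance with entries polynomial in $n$, writing $M^R(n)=I+N(n)$ with $N(n)$ strictly triangular, each entry of $(M^R(n))^{-1}=\sum_{k\geq 0}(-N(n))^k$ involves only the finitely many dominance chains between two fixed partitions, so it is a finite $\Q$-combination of products of polynomial entries. Hence $P_{\lambda\mu}(n)\in\Q[n]$ as required.

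The main obstacle I expect is the dominance bookkeeping: verifying carefully the equivalence ``$\nu\geq_d\lambda'\uparrow n \iff \mu'\leq_d\lambda$'' (where $\mu$ is $\nu$ with its largest part removed) via the truncated-sum characterisation, and checking that the polynomial formulas respect the boundary conventions $m_{\lambda\to n}=0$ or $e_{\mu\uparrow n}=0$ at small $n$.
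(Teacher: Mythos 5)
Your argument is correct and is essentially the ``direct'' proof the paper merely alludes to (it only cites Macdonald's computation of the stable Kostka numbers $K_{\lambda,\mu\to n}$ and remarks that one can alternatively expand $e_{\lambda\uparrow n}$ in sufficiently many variables and collect monomials, which is exactly your $0$--$1$ matrix count and the source of the binomials $\binom{n}{k}$). The extra details you supply --- the conjugation identities $(\lambda\to n)'=\lambda'\uparrow n$, the Gale--Ryser unitriangularity giving the dominance supports, and the nilpotent-expansion inversion for the $P_{\lambda\mu}$ direction --- are all sound and fill in what the paper leaves unsaid.
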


\noindent This fact follows from the study of the Kotska matrix elements $K_{\lambda,\mu\rightarrow n}$, see \cite[\S1.6, in particular the example 4. (c)]{Mac95}. It can also be shown directly by expanding $e_{\lambda \uparrow n}$ on a sufficient number of variables and collecting the monomials; this simpler proof explains the appearance of binomial coefficients $\binom{n}{k}$. For instance,
\begin{align*}m_{2,1\rightarrow n}&=e_{2,1 \uparrow n}-3\,e_{3\uparrow n}-(n-3)\,e_{1,1 \uparrow n} +(2n-8)\,e_{2 \uparrow n} +(2n-5)\,e_{1 \uparrow n}- n(n-4)\,e_{\uparrow n},\\
e_{2,1\uparrow n}&=\frac{n(n-1)(n-2)}{2}\,m_{\rightarrow n}+\frac{(n-2)(3n-7)}{2}\,m_{1 \rightarrow n}+(3n-10)\,m_{1,1\rightarrow n}+3\,m_{1,1,1\rightarrow n}\\
&+(n-3)\,m_{2 \rightarrow n}+m_{2,1 \rightarrow n}.
\end{align*}
In the following, $N_{\lambda,n}=N_{\lambda \uparrow n}$ if $|\lambda|\leq n$, and $0$ otherwise. Because of the existence of the projective limits $M_{\lambda}$, we know that  $N_{\lambda,n}\,*\,N_{\mu,n}=\sum_{\nu}\,g_{\lambda\mu}^{\nu}\,N_{\nu,n}$, where the sum is not restricted. But on the other hand, by using Proposition \ref{madhatter} and the second identity in Proposition \ref{psychosocial}, one sees that
$$N_{\lambda,n}\,*\,N_{\mu,n}=\sum_{|\rho|\leq |\lambda|,\,\,|\sigma|\leq|\mu|}h_{\lambda\mu}^{\rho\sigma}(n)\,\,\,\Gamma_{\rho,n}\,*\,\Gamma_{\sigma,n}, \quad\text{with the }h_{\lambda\mu}^{\rho\sigma}(n) \in \Q[n,q,q^{-1}].$$
Because of the result of Francis and Wang, the latter sum may be written as $\sum_{|\tau|\leq |\lambda|+|\mu|}i_{\lambda\mu}^{\tau}(n)\,\Gamma_{\tau,n}$, and by using the first identity of Proposition \ref{psychosocial}, one has finally
$$ N_{\lambda,n}\,*\,N_{\mu,n}=\sum_{|\nu| \leq |\lambda|+|\mu|}\,j_{\lambda\mu}^{\nu}(n)\,N_{\nu,n}, \quad\text{with the }j_{\lambda\mu}^{\nu}(n) \in \Q[n,q,q^{-1}].$$
From this, it can be shown that the first sum $\sum_{\nu}\,g_{\lambda\mu}^{\nu}\,N_{\nu,n}$ is in fact restricted on partitions $|\nu|$ such that $|\nu| \leq |\lambda|+|\mu|$, and because the projections separate the vectors of $\dlg_{\infty,q}'$, this implies that $M_{\lambda}\,*\,M_{\mu}=\sum_{|\nu| \leq |\lambda|+|\mu|} g_{\lambda\mu}^{\nu}\,M_{\nu}$, so the last part of Theorem \ref{asylum} is proved. Finally, by reversing the argument, one sees that the $a_{\lambda\mu}^{\nu}(n,q)$ are in $\Q[n](q)$:
\begin{align*}
\Gamma_{\lambda,n}\,*\,\Gamma_{\mu,n}&=(q/(q-1))^{|\lambda|+|\mu|}\,\sum_{\rho,\sigma}P_{\lambda\rho}(n)\,P_{\mu\sigma}(n)\,\,N_{\rho,n}\,*\,N_{\sigma,n} \\
&=(q/(q-1))^{|\lambda|+|\mu|}\,\sum_{\rho,\sigma,\tau}\,P_{\lambda\rho}(n)\,P_{\mu\sigma}(n)\,g_{\rho\sigma}^{\tau}\,N_{\tau,n} \\
&=\sum_{\rho,\sigma,\tau,\nu}(q/(q-1))^{|\lambda|+|\mu|-|\nu|}\,P_{\lambda\rho}(n)\,P_{\mu\sigma}(n)\,g_{\rho\sigma}^{\tau}(q)\,Q_{\tau\nu}(n)\,\Gamma_{\nu,n}=\sum_{\nu} a_{\lambda\mu}^{\nu}(n,q)\,\Gamma_{\nu,n}
\end{align*}
with $a_{\lambda\mu}^{\nu}(n,q)=(q/(q-1))^{|\lambda|+|\mu|-|\nu|}\,(P^{\otimes 2}(n)\,g(q)\,Q(n))_{\lambda\mu}^{\nu}$ in tensor notation. And since the $\Gamma_{\lambda}$ are known to be defined over $\Z[q,q^{-1}]$, the coefficients $a_{\lambda\mu}^{\nu}(n,q) \in \Q[n](q)$ are in fact\footnote{They are even in $\Q_{\Z}[n]\otimes\Z[q,q^{-1}]$, where $\Q_{\Z}[n]$ is the $\Z$-module of polynomials with rational coefficients and integer values on integers; indeed, the matrices $M2E$ and $E2M$ have integer entries. It is well known that $\Q_{\Z}[n]$ is spanned over $\Z$ by the binomials $\binom{n}{k}$.} in $\Q[n,q,q^{-1}]$. Using this technique, one can for instance show that
$$(\Gamma_{(1),n})^{2}=\frac{n(n-1)}{2}\,q\,\Gamma_{(0),n}+(n-1)\,(q-1)\,\Gamma_{(1),n}+(q+q^{-1})\,\Gamma_{(1,1),n}+(q+1+q^{-1})\,\Gamma_{(2),n},$$
and this is because $m_{1\rightarrow n}=e_{1 \uparrow n}-n\,e_{\uparrow n}$ and $e_{1\uparrow n}=n\,m_{\rightarrow n}+m_{1\rightarrow n}$. Let us conclude by two remarks. First, the reader may have noticed that we did not construct generic conjugacy classes $F_{\lambda} \in \alg_{\infty,q}$ such that $\pr_{\infty,n}(F_{\lambda})=\Gamma_{\lambda,n}$; since the Geck-Rouquier elements themselves are difficult to describe, we had little hope to obtain simple generic versions of these $\Gamma_{\lambda}$. Secondly, the Ivanov-Kerov projective limits of other group algebras --- \emph{e.g.}, the algebras of the finite reductive Lie groups $\mathrm{GL}(n,\mathbb{F}_{q})$, $\mathrm{U}(n,\mathbb{F}_{q^{2}})$, etc. --- have not yet been studied. It seems to be an interesting open question.
\bigskip

\bibliographystyle{alpha}
\bibliography{fpsac}

\end{document}